\documentclass[11pt,a4paper]{article}
\usepackage[utf8]{inputenc}

\textheight 210mm
\textwidth 165 mm
\topmargin 0 mm
\oddsidemargin 0 mm

\usepackage[hidelinks]{hyperref}
\usepackage{multicol}
\usepackage{amsmath, amsfonts, amssymb,mathtools,mathrsfs}
\usepackage{amsthm}
\usepackage{graphicx}
\graphicspath{ {images/} }
\usepackage{pst-all,pst-infixplot,pst-math,pst-fractal,pst-solides3d}
\usepackage{pict2e}

\newtheorem{thm}{Theorem}[section]
\newtheorem{prop}{Proposition}[section]

\newtheorem{lemma}{Lemma}[section]
\newtheorem{corollary}{Corollary}[section]

\newtheorem{defi}{Definition}[section]
\newtheorem{rem}{Remark}[section]

\DeclareMathOperator*{\essinf}{ess\,inf}

\newenvironment{Assumptions}
{
\setcounter{enumi}{0}

\begin{enumerate}}
{\end{enumerate} }

\title{On the well-posedness of a Hamilton-Jacobi-Bellman equation with transport noise}

\author{\textsc{Neeraj Bhauryal$^{a}$\footnote{neeraj.bhauryal@gmail.com}, Ana Bela Cruzeiro$^{b}\footnote{ana.cruzeiro@tecnico.ulisboa.pt}$, Carlos Oliveira$^{c,d}$\footnote{carlos.m.d.s.oliveira@ntnu.pt}}\\
 $^{a}$\textit{GFMUL, Universidade de Lisboa}\\
 $^{b}$\textit{GFMUL and Dep. de Matemática, Instituto Superior Técnico} \\
 \textit{Av. Rovisco Pais, 1049-001 Lisboa, Portugal} \\
 $^c$\textit{Department of Industrial Economics and Technology Management,}\\ \textit{Norwegian University of Science and Technology, 7491 Trondheim, Norway}\\
 $^{d}$\textit{ISEG - School of Economics and Management, Universidade de Lisboa}\\
 \textit{Rua do Quelhas 6, Lisboa 1200-781, Portugal}\\
 \textit{and REM-Research in Economics and Mathematics, CEMAPRE}}

\begin{document}
\maketitle
\date{}
\begin{abstract}
        In this paper we consider the following non-linear stochastic partial differential equation (SPDE): 
\begin{align*}
\begin{cases}
     \mathrm{d}u(s,x)=\sum^n_{i=1} \mathscr{L}_i u(s,x)\circ \mathrm{d}W_i(s)+\left(V(x)+\mu\Delta u(s,x)-\frac{1}{2}\vert\nabla u(s,x)\vert^2\right)\mathrm{d}s, \quad &\text{in } (0,T)\times \mathbb{T}^n,\\
    u(0,x)=u_0(x), & \text{on } \mathbb{T}^n,
    \end{cases}
\end{align*}
where $\mathbb{T}^n$ is the $n$-dimensional torus, the functions $u_0, V: \mathbb{T}^n \to \mathbb{R}$ are given and $\{\mathscr{L}_i\}_i$ is a collection of first order linear operators. This can be seen as a Cauchy problem for a Hamilton-Jacobi-Bellman equation with transport noise in any space dimension. We introduce the concept of a strong solution from the realm of PDEs and establish the existence and uniqueness of maximal solutions (strong solutions upto a stopping time). Moreover, for a particular class of $\{\mathscr{L}_i\}_i$ we establish global well-posedness of strong solutions. The proof relies on studying an associated truncated version of the original SPDE and showing its global well-posedness in the class of strong solutions.
\end{abstract}

\section{Introduction}
The field of SPDEs has been an active area of research due to their vast applications in Applied Mathematics, Biology, Engineering, Economics, Physics among others.
 The recent years have witnessed a surge in articles  interested in stochastic perturbations of partial differential equations (PDEs), since the study of random effects is crucial while crafting effective mathematical models  
to describe complex phenomena. There are a number of ways through which randomness can enter into this type of models, and we are particularly interested in stochastic forcing of a class of PDEs arising from optimal control.

A stochastic Hamilton-Jacobi-Bellman (HJB) equation is a SPDE, which appears associated with stochastic optimal control problems where the action functional is itself random. Peng pioneered the topic with the paper \cite{peng1992stochastic}. Several others have been contributing to this field by studying different action functionals, both in the field of Financial Mathematics (\cite{englezos2009utility,el2018consistent,graewe2015non}) and Physics (\cite{anabela}, \cite{gay}).

In this article, we are interested in a special class of stochastic HJB equations, namely stochastic HJB equations with a multiplicative noise given by the gradient, a fact that results in a number of technical challenges.   The question of well-posedness for SPDEs with gradient noise has been considered by many authors in recent years.  Such equations have been considered in  \cite{Lions} and \cite{lions1998fully} in connection with a pathwise generalized optimal control problem where the underlying stochastic flow is driven by two independent Brownian motions and the action functional is defined taking the expectation with respect to one of them. Often, there are no global smooth solutions (even in the deterministic case) for HJB equations. The notion of stochastic viscosity solutions for such equations have been proposed firstly by Lions and Souganidis (cf. \cite{Lions, lions1998fully,lions2000fully}) and developed by others (cf. \cite{Buckdahn,BUCKDAHN2001205,qiu2018viscosity, seeger2018perron}). For an overview of the pathwise theory of viscosity solutions we refer the reader to Souganidis \cite{Souganidis}. For first order parabolic SPDEs, Kunita \cite{kunita} proved the existence and uniqueness of local solution using stochastic characteristic curves. For a linear transport equation with gradient noise, a significant work is established in \cite{gubinelli}, wherein the authors proved existence and uniqueness of weak $L^\infty$ solutions. Recently, for nonlinear scalar conservation laws, the results were extended by the authors in \cite{gess} to the framework of kinetic solutions. 
 
In this work, we show existence of maximal solution in $C([0,T]; H^k(\mathbb{T}^n))$ a.s. to the following Cauchy problem
\begin{align}\label{HJB}
\begin{cases}
    \mathrm{d}u(s,x)=\sum^n_{i=1}\mathscr{L}_i u(s,x)\circ \mathrm{d}W_i(s)+\left(V(x)+\mu\Delta u(s,x)-\frac{1}{2}\vert\nabla u(s,x)\vert^2\right)\mathrm{d}s, \quad &\text{in } (0,T)\times \mathbb{T}^n,\\
    u(0,x)=u_0(x), & \text{on } \mathbb{T}^n,
    \end{cases}
\end{align}
where $\mathbb{T}^n$ is the $n$-dimensional flat torus, the functions $u_0, V: \mathbb{T}^n \to \mathbb{R}$ are given (see Section \ref{sec:tech} for the assumptions) and $\mu$ is a given positive constant. Here, $\mathscr{L}_i$ denotes the first order linear operator defined as $\mathscr{L}_i u:= a_i(x)\frac{\partial}{\partial x_i}u+b_i(x)u$ for $i=1,2,\cdots,n$. We consider a stochastic basis $(\Omega, \mathcal{F}, \mathbb{P}, (\mathcal{F}_t)_{t \ge 0})$, where $(\Omega, \mathcal{F}, \mathbb{P})$ is a probability space, $(\mathcal{F}_t)_{t \ge 0}$ is a complete filtration and $W$ is an $n-$dimensional Brownian motion defined on $\Omega$. The symbol  $\circ$ denotes that the stochastic integral is understood in the Stratonovich sense.\\ 
The Cauchy problem \eqref{HJB} is a generalization of the following 
\begin{align}\label{HJBc}
\begin{cases}    \mathrm{d}u(s,x)=-\sqrt{\nu}\nabla u(s,x)\circ \mathrm{d}W(s)+\left(V(x)+\mu\Delta u(s,x)-\frac{1}{2}\vert\nabla u(s,x)\vert^2\right)\mathrm{d}s, \quad &\text{in } (0,T)\times \mathbb{T}^n,\\
    u(0,x)=u_0(x), & \text{on } \mathbb{T}^n,
    \end{cases}
\end{align}
which is related with a pathwise control problem introduced in \cite{lions1998fully}. They considered the following stochastic differential equation
 \begin{align*}
     \mathrm{d}Z_t =b(u_t,Z_t)\mathrm{d}t+\Tilde{\sigma}(Z_t,u_t) \mathrm{d}W_t+\sigma(Z_t)\circ\mathrm{d}B_t,~~
     Z_0=x,
 \end{align*}
 where $W_t$ and $B_t$ are two independent Brownian motions on $(\Omega, \mathcal{F}, \mathbb{P})$, $u$ is a control process and denote by $\{\mathcal{F}^B_t\}_{t \ge 0}$ the filtration generated by $B$. We are interested in a particular case where $\widetilde{\sigma}$ and $\sigma$ are postivie constants, denoted by $\mu$ and $\nu$ respectively and the objective is to minimize the random action functional $J_{t,x}$ defined as 
 \begin{align*}
     J_{t,x}(u)= E\left[\int^t_0 \left( \frac12| u_s|^2+V(Z_s)\right)\,ds+S(Z_t)\bigg| \mathcal{F}^B_t\right]
 \end{align*}
 over the set of admissible controls $\mathcal{U}$, for some potential functions $V$ and $S$. The corresponding value process is defined as 
 $$U(t,x):= \essinf_{u \in \mathcal{U}} J_{t,x}(u), $$
which satisfies \eqref{HJBc}, according to Lions and Souganidis \cite{lions1998fully} (see Theorem 2.1). To the best of our knowledge, this type of stochastic equations have not been studied yet in the classical sense.

When it is possible to differentiate the solutions of stochastic HJB equations, which is the case in our results, we obtain a stochastic Burgers equation with gradient noise. Stochastic Burgers equations have been studied intensively, in various contexts, but mostly as random perturbations of the deterministic Burgers equation by a Brownian motion (additive noise, sometimes multiplicative but not often of  gradient type). 
Concerning  the equations we study, it turns out that they are
 important for turbulence in fluid dynamics. In particular they appear when decomposing Lagrangian flows into a slow large-scale mean and a rapidly fluctuating small-scale part \cite{cotter}.
 On the other hand they are related to pathwise variational principles, as said before.
 An existence theorem for the corresponding Cauchy problem in one dimension was proved in \cite{alonso}.
 
The remainder of the paper is structured as follows: In Section, \ref{sec:tech} we describe the technical framework, establish the key notation and introduce the assumptions before finally stating the main result of this article. In Section \ref{sec:truncated}, we introduce a truncated version of the original problem and prove its well-posedness. This is done by considering a hyper-regularised truncated equation and proving global existence of mild solutions.  We then demonstrate the required compactness methods and techniques to construct a limiting solution from this hyper-regularised equation. In Section \ref{sec:main} we first establish a key maximum principle, which helps us in  establishing the well-posedness theory for the problem under consideration \eqref{HJB}. Finally, Appendix \ref{app} recapitulates a few existing results about analytical semigroups and stochastic analysis.  

\section{Technical framework and statement of the main results}\label{sec:tech}
We start this section by introducing some notation and definitions, which will be necessary throughout the paper.
In what follows, the letter $C$ denotes various generic constants, which may differ from line to line, but we prefer to keep the notation unchanged. Whenever a stopping time is defined in this article, we use the convention $\inf \phi = \infty$, where $\phi$ denotes the empty set. The expression $``a\lesssim b"$ will be used to represent that for some $C>0$, $a \le C b$. Let us begin by stating some notations and spaces used throughout the article. Sobolev spaces are defined as $$W^{k,p}(\mathbb{T}^n):=\{ f \in L^p(\mathbb{T}^n) : (I- \Delta)^{k/2}f \in L^p(\mathbb{T}^n)\}$$
for any $k\ge 0$ and $p\ge 1$, equipped with the norm $\|f \|_{W^{k,p}}=  \|(I-\Delta)^{k/2}f\|_{L^p}$, where the term $(I-\Delta)^{k/2}f$ is the function whose Fourier transform is given by $(1+|\xi|^2)^{k/2}\hat{f}(\xi)$, and $\hat{f}$ denotes the Fourier transform of the function $f$. For the particular case $p=2$, we will use the notation $H^k(\mathbb{T}^n)= W^{k,2}(\mathbb{T}^n)$, with the dual being denoted by $H^{-k}(\mathbb{T}^n)$. The fractional Laplacian of order $0<k<2$,  $(-\Delta)^{k/2}$ is denoted by $\Lambda^k$. We also recall the space with fractional power in time, for fixed $p>1$ and $0<\alpha<1$, we define
$$
\mathscr{W}^{\alpha, p}([0, T] ; X)=\left\{u \in L^p([0, T] ; X): \int_0^T \int_0^T \frac{\|u(t)-u(s)\|_X^p}{|t-s|^{1+p \alpha}} \mathrm{d} t \mathrm{~d} s<\infty\right\} ,
$$
for a Banach space $X$ and
we endow this space with the following norm
$$
\|u\|_{\mathscr{W}^{\alpha, p}([0, T] ; X)}:=\left(\int_0^T\|u(t)\|_X^p \mathrm{~d} t+\int_0^T \int_0^T \frac{\|u(t)-u(s)\|_X^p}{|t-s|^{1+p \alpha}} \mathrm{d} t \mathrm{~d} s \right)^{1/p}.
$$
The objective of this paper is to establish the existence and uniqueness of classical solutions for the Cauchy problem \eqref{HJB}. Although the concept of (stochastic) viscosity solutions is  commonly employed in (stochastic) optimal control (see for instance \cite{Buckdahn,fleming2006controlled,Lions}) to prove existence and uniqueness of solution to the respective (stochastic) HJB equation, we opt for a different concept of solution in our analysis. We shall work with the following assumptions:
\begin{Assumptions}
    \item \label{A1} The initial data $u_0$ is deterministic and belongs to $H^k(\mathbb{T}^n)$.
    \item \label{A3} The coefficients $a_i,b_i \in W^{k+1,\infty}(\mathbb{T}^n)$ for each $i=1,2,\cdots, n$ and $V\in H^k(\mathbb{T}^n)$. 
\end{Assumptions}
We begin by stating some different notions of solution that would be required in the analysis.

\begin{defi}[Weak solution]
A pair $(\tau,u)$ is called a weak solution of \eqref{HJB}, where $u:[0, \tau] \times \mathbb{T}^n \times \Omega \rightarrow \mathbb{R}$, $\tau : \Omega \to [0,\infty]$ is a stopping time, if for every $\varphi \in C^\infty(\mathbb{T}^n)$, $\langle u(t\wedge \tau), \varphi \rangle_{L^2}$ is adapted to $(\mathcal{F}_t)_{t\ge 0}$  and satisfies the following 
$$
\begin{aligned}
\langle u(\tau'),\varphi \rangle_{L^2}&=\langle u(0), \varphi \rangle_{L^2} + \sum^n_{i=1}\int^{\tau'}_0 \langle u, \mathscr{L}^*_i\varphi \rangle_{L^2} \circ \mathrm{d}W_i(s)+\int_0^{\tau'}\left(\langle V,\varphi \rangle_{L^2} +\mu \langle u,\Delta \varphi \rangle_{L^2}\right)\mathrm{d}s\\
&- \int^{\tau'}_0 \frac12  \left\langle|\nabla u|^2,\varphi \right \rangle_{L^2}\mathrm{d}s 
\end{aligned}
$$
for all finite stopping time $\tau' \le \tau$.   
\end{defi}
\begin{defi}[Local solution]
Let $\tau:\Omega\to[0,\infty]$ be a stopping time. A random variable $u: \Omega \times[0, \tau]\times\mathbb{T}^n \rightarrow \mathbb{R}$ is called a local solution of \eqref{HJB} if it has trajectories of class $C\left([0, \tau] ; H^k\left(\mathbb{T}^n\right)\right)$ for $k> n/2$, such that $u(t \wedge \tau)$ is adapted to $\left(\mathcal{F}_{t}\right)_{t \geq 0}$, and the following holds
$$
\begin{aligned}
u_{\tau^{\prime}}-u_{0} &+\frac12\int_{0}^{\tau^{\prime}} |\nabla u|^2 \mathrm{d}s+\sum^n_{i=1}\int_{0}^{\tau^{\prime}} \mathscr{L}_i u \circ \mathrm{d}W_i(s) \\
&= \int^{\tau'}_0 \left( V(x) +\mu \Delta u\right)\mathrm{d}s
\end{aligned}
$$
for finite stopping times $\tau^{\prime} \leq \tau$.
\end{defi}
\begin{defi}[Maximal solution]
Let $\tau_{\text{max}}: \Omega \to [0,\infty]$ be a stopping time. A maximal solution of \eqref{HJB} is a pair $(\tau_{\text{max}},u)$, where $u:\Omega \times[0, \tau_{\text{max}}) \times \mathbb{T}^n\to \mathbb{R}$ is a random variable such that:
\begin{itemize}
    \item[-] $\mathbb{P}\left(\tau_{\max }>0\right)=1$, and $\tau_{\max }=\lim _{n \rightarrow \infty} \tau_n$ for an increasing sequence of stopping times $\left\{\tau_n\right\}_{n=1}^{\infty}$, where each $(\tau_n,u)$ is a local solution for $n \in \mathbb{N}$.
\item[-] If $\left(\tau^{\prime}, u^{\prime}\right)$ is another pair satisfying the above two conditions and $u^{\prime}=u$ on $\left[0, \tau^{\prime} \wedge \tau_{\max }\right)$, then $\tau^{\prime} \leq \tau_{\max }$, almost surely.
\end{itemize}
\end{defi}
\begin{defi}[Global classical solution]
A global classical solution of \eqref{HJB} is maximal solution of \eqref{HJB} such that $\tau_{\max }=\infty$, almost surely.
\end{defi}
We are now ready to state the main result of this article:
\begin{thm}\label{uniqueness}
Assume \ref{A1}-\ref{A3}; then there exists a unique pathwise maximal solution $u:\Omega \times [0,\infty) \times \mathbb{T}^n \to \mathbb{R}$ of the stochastic HJB equation \eqref{HJB}  with values in $H^k(\mathbb{T}^n)$ for $k>\frac{n}{2}+2$. Furthermore, there exists a  global classical solution for a particular class of $\mathscr{L}_i$ mentioned in Section \ref{sec:main}.
\end{thm}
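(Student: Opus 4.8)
The plan is to prove both assertions of Theorem \ref{uniqueness} by the truncation--regularisation scheme announced in the introduction. The first step is to rewrite \eqref{HJB} in It\^o form. Since $\mathscr{L}_i = a_i\partial_{x_i} + b_i$ is first order, the Stratonovich--It\^o correction produces the second-order operator $\tfrac12\sum_i\mathscr{L}_i^2$, whose principal part is $\tfrac12\sum_i a_i^2\partial_{x_i}^2$. Because each $\mathscr{L}_i$ involves only $\partial_{x_i}$, no mixed derivatives appear, and the full diffusion operator $\mathcal{A} := \mu\Delta + \tfrac12\sum_i\mathscr{L}_i^2$ is uniformly elliptic for $\mu>0$ under \ref{A3}; it therefore generates an analytic semigroup on $H^k(\mathbb{T}^n)$. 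This parabolic structure is what I would exploit throughout.

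Next I would truncate the only genuinely nonlinear term. Fixing a smooth cut-off $\theta_R\colon[0,\infty)\to[0,1]$ with $\theta_R\equiv 1$ on $[0,R]$ and $\theta_R\equiv 0$ on $[2R,\infty)$, I replace $\tfrac12|\nabla u|^2$ by $F_R(u):=\tfrac12\,\theta_R(\|u\|_{W^{1,\infty}})|\nabla u|^2$, which is globally Lipschitz from $H^k$ into $H^{k-1}$ for $k>n/2+2$ (using that $H^{k-1}$ is an algebra). To gain enough smoothing to close a fixed-point argument despite the derivative loss in the martingale term $\sum_i\mathscr{L}_i u\,\mathrm{d}W_i$, I would add a hyperviscosity term $-\varepsilon\Lambda^{2m}u$ (with $m$ large) to obtain a regularised truncated equation whose semigroup is strongly smoothing. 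A contraction argument in a space of the type $L^p(\Omega;C([0,T];H^k))$ then yields a unique global mild solution $u^{R,\varepsilon}$, globality following from the boundedness of $F_R$.

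The heart of the matter is the passage $\varepsilon\to 0$. I would derive $\varepsilon$-uniform bounds by applying It\^o's formula to $\|u^{R,\varepsilon}\|_{H^k}^2$: the crucial cancellation is that the It\^o correction $\tfrac12\sum_i\|\Lambda^k\mathscr{L}_i u^{R,\varepsilon}\|_{L^2}^2$ from the quadratic variation of the noise is balanced, up to lower-order commutator terms, by the drift contribution $\sum_i\langle \Lambda^k\mathscr{L}_i^2 u^{R,\varepsilon},\Lambda^k u^{R,\varepsilon}\rangle_{L^2}$ of the Stratonovich correction --- this is the energy conservation property of transport noise and is what prevents the first-order noise from destabilising the $H^k$ norm. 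Controlling the remaining commutators $[\Lambda^k,\mathscr{L}_i]$ by Kato--Ponce estimates, absorbing them into the $\mu$-dissipation, and using the boundedness of the truncated nonlinearity, one obtains a uniform bound for $\mathbb{E}\sup_{t\le T}\|u^{R,\varepsilon}\|_{H^k}^2$. Combining this with fractional-in-time estimates in $\mathscr{W}^{\alpha,p}([0,T];H^{k-1})$ and an Aubin--Lions/Skorokhod compactness argument produces a martingale solution of the truncated equation; pathwise uniqueness --- again via an energy estimate on the difference using the Lipschitz property of $F_R$ --- upgrades this to a probabilistically strong solution by the Gy\"ongy--Krylov characterisation. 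I expect this uniform-in-$\varepsilon$, derivative-loss-resistant estimate, together with the strong convergence of $\nabla u^{R,\varepsilon}$ needed to identify the nonlinear limit, to be the main technical obstacle.

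With the truncated problem solved for every $R$, the maximal solution is assembled by localisation: set $\tau_R := \inf\{t\ge0 : \|u^R(t)\|_{W^{1,\infty}}\ge R\}$, so that on $[0,\tau_R)$ the cut-off is inactive and $u^R$ solves \eqref{HJB}; uniqueness of the truncated solutions gives $u^{R'}=u^R$ on $[0,\tau_R)$ for $R'>R$, whence $(\tau_{\max},u)$ with $\tau_{\max}:=\lim_R\tau_R$ and $u:=u^R$ on $[0,\tau_R)$ is well defined and, by the consistency relation, is the unique maximal solution. Finally, for the special $\mathscr{L}_i$ of Section \ref{sec:main}, global existence reduces to ruling out $\tau_{\max}<\infty$, i.e.\ to an a priori bound on $\|u\|_{W^{1,\infty}}$ over every finite interval. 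Here I would invoke the maximum principle of Section \ref{sec:main}: a comparison argument yields an $L^\infty$ bound on $u$, and differentiating the equation and applying the maximum principle to $\nabla u$ --- the constant-coefficient transport terms being handled along the associated stochastic characteristics, in the spirit of a stochastic Cole--Hopf transform --- gives a finite bound on $\sup_{t\le T}\|\nabla u(t)\|_{L^\infty}$. This forces $\tau_R\to\infty$, hence $\tau_{\max}=\infty$ almost surely, producing the global classical solution.
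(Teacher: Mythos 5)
Your proposal follows essentially the same route as the paper: cut off the quadratic nonlinearity, regularise with a high-order hyperviscosity and solve by a fixed point in $L^2(\Omega;C([0,T];H^k))$, pass to the limit via $\gamma$-uniform energy estimates exploiting the cancellation between the It\^o correction and the Stratonovich drift (the paper's Lemma \ref{operator}) together with Kato--Ponce, Aubin--Lions/Skorokhod and Gy\"ongy--Krylov, then assemble the maximal solution by localisation and obtain globality for the special $\mathscr{L}_i$ from the gradient maximum principle proved along stochastic characteristics. The argument is correct and matches the paper's strategy in all essential steps.
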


\begin{rem}
    For $\mu=0$ in  Eq. \eqref{HJB}, we only get well-posedness of a maximal solution $(\tau_{\text{max}},u)$ in $H^k(\mathbb{T}^n)$.
\end{rem}

\section{Well-posedness of the truncated equation}\label{sec:truncated}
We begin this section by introducing a truncated version of the stochastic HJB equation and first prove the uniqueness of global solution for the truncated equation in Subsection \ref{subsec:uniqueness}. The existence of solution to the truncated equation is then proven using the regularization with a higher order operator in Subsection \ref{subsec:existence}.\\
The investigation into the existence of solution for Eqn. \eqref{HJB} is related with the study of the following associated truncated equation
\begin{align}\label{truncated}
    \mathrm{d}u_r(s,x)= \sum^n_{i=1} \mathscr{L}_i u_r(s,x)\circ \mathrm{d}W_i(s)+\left(V(x)+\mu\Delta u_r(s,x)-\frac12\theta_r(\|\nabla u_r(s)\|_\infty)\vert\nabla u_r(s,x)\vert^2\right)\mathrm{d}s
\end{align}
with the initial data $u_0$,
where for some $r>0$, $\theta_r:[0,\infty) \to [0,1]$ is a smooth non-increasing function defined as
\begin{align*}
    \theta_r(x)=
    \begin{cases}
         1, \quad x\in [0, r],\\
         0, \quad x \in [r+1, \infty),
    \end{cases}
\end{align*}
and $\theta_r$ is some non-negative polynomial in $[r,r+1]$. We also introduce the stopping time important for our analysis, namely $\tau_r =\inf \{ t \ge 0 : \|u_r(t,\cdot)\|_{H^k} \ge \frac rC\}$, where $C$ is a constant in the Sobolev inequality  $\|\nabla u_r\|_\infty \le C\| u_r \|_{H^k(\mathbb{T}^n)} $. It is common (see \cite{alonso-leon},\cite{crisan},\cite{glatt}) to employ a cut-off in the nonlinear term to prove the existence and uniqueness of maximal solutions to a truncated equation like \eqref{truncated}.

We shall now state the main well-posedness result for the global solutions of \eqref{truncated}, which will be the foundation to prove well-posedness results for \eqref{HJB}.
\begin{thm}\label{WP-truncated}
For given $u_0\in H^k(\mathbb{T}^n)$ for some $k>\frac{n}{2}+2$ and $r>0$ fixed, there exists a unique global solution in $H^k(\mathbb{T}^n)$ for \eqref{truncated}.
\end{thm}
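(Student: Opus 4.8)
The plan is to recast the Stratonovich equation \eqref{truncated} in Itô form, treat uniqueness by an energy estimate that exploits the cut-off, and obtain existence through a hyper-regularised approximation followed by a compactness argument. Since each $\mathscr{L}_i u = a_i\partial_{x_i}u + b_i u$ is first order, the Stratonovich-to-Itô correction produces $\tfrac12\sum_i\mathscr{L}_i^2 u_r$, whose principal part is $\tfrac12\sum_i a_i^2\partial_{x_i}^2 u_r$. I combine this with the viscosity term to define $\mathcal{A}:=\mu\Delta+\tfrac12\sum_i\mathscr{L}_i^2$. Under \ref{A3} this operator has smooth coefficients and is uniformly elliptic on $\mathbb{T}^n$, as its principal symbol equals $-\mu|\xi|^2-\tfrac12\sum_i a_i^2\xi_i^2\le-\mu|\xi|^2$; hence $\mathcal{A}$ generates an analytic semigroup $\{S(t)\}_{t\ge0}$ on each Sobolev scale $H^m(\mathbb{T}^n)$ (see Appendix \ref{app}). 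The equation then reads, in Itô form, $\mathrm{d}u_r=\bigl(\mathcal{A}u_r+V-\tfrac12\theta_r(\|\nabla u_r\|_\infty)|\nabla u_r|^2\bigr)\mathrm{d}s+\sum_i\mathscr{L}_i u_r\,\mathrm{d}W_i$, which I interpret in the mild sense via $S$.

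For uniqueness I take two solutions $u_r,v_r$ with the same initial datum, set $w=u_r-v_r$, and apply the Itô formula to $\|w\|_{H^k}^2$ (equivalently, test with $(I-\Delta)^k w$). The delicate point is that the martingale term $\sum_i\langle\mathscr{L}_i w,w\rangle_{H^k}$ and its Itô correction $\tfrac12\sum_i\langle\mathscr{L}_i^2 w,w\rangle_{H^k}$ would each cost a derivative in isolation, but the transport structure makes them combine, after commutator estimates using $a_i,b_i\in W^{k+1,\infty}$, into a term bounded by $C\|w\|_{H^k}^2$; this cancellation is what keeps the estimate closed at the $H^k$ level. The difference of nonlinearities is handled by noting that, because of the cut-off, the map $u\mapsto\theta_r(\|\nabla u\|_\infty)|\nabla u|^2$ is globally Lipschitz from $H^k$ into $H^{k-1}$ (here $k>\tfrac n2+2$ gives $H^{k-1}\hookrightarrow W^{1,\infty}$, and the argument of $\theta_r$ is effectively capped at $r+1$). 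Taking expectations and invoking Gronwall yields $\mathbb{E}\sup_{t\le T}\|w\|_{H^k}^2=0$, hence pathwise uniqueness.

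For existence I regularise by adding a dissipative higher-order term $-\epsilon\Lambda^{2\gamma}$ with $\gamma$ large, so that $\mathcal{A}-\epsilon\Lambda^{2\gamma}$ generates an analytic semigroup $S_\epsilon$ with stronger smoothing. Because the cut-off makes the nonlinearity globally Lipschitz and bounded, a Banach fixed-point argument in $L^2(\Omega;C([0,T];H^k))$, with the stochastic convolution controlled by the factorisation method and Burkholder's inequality, produces a unique global mild solution $u_r^\epsilon$ for each $\epsilon>0$. I then seek estimates uniform in $\epsilon$: applying the Itô formula to $\|u_r^\epsilon\|_{H^k}^2$, the transport-noise cancellation of the previous paragraph controls the noise contribution, the $\epsilon$-term carries a favourable sign, and the crucial mechanism is that the parabolic dissipation coming from $\mathcal{A}$ (the negative term $-\mu\|\nabla u_r^\epsilon\|_{H^k}^2$) absorbs the quadratic nonlinearity through Young's inequality, precisely because $\theta_r$ bounds $\|\nabla u_r^\epsilon\|_\infty$ by $r+1$. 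This gives a bound on $\mathbb{E}\sup_{t\le T}\|u_r^\epsilon\|_{H^k}^2$ independent of $\epsilon$, and a companion estimate in $\mathscr{W}^{\alpha,p}([0,T];H^{k-1})$ supplies the time-regularity needed for compactness.

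These uniform bounds, together with the compact embeddings $H^k\hookrightarrow\hookrightarrow H^{k-\delta}$ on the torus and an Aubin--Lions--Simon criterion using the $\mathscr{W}^{\alpha,p}$ estimate, yield tightness of the laws of $\{u_r^\epsilon\}$ in $C([0,T];H^{k-\delta})$ for small $\delta>0$. By Prokhorov and Skorokhod I pass to a new stochastic basis on which suitable copies converge almost surely, identify the limit as a probabilistically weak solution, and then promote it to a pathwise solution on the original basis through the Gyöngy--Krylov characterisation combined with the pathwise uniqueness established above; the solution is global because all estimates hold on $[0,T]$ for arbitrary $T$. I expect the main obstacle to be the passage to the limit in the fully nonlinear term $\theta_r(\|\nabla u_r^\epsilon\|_\infty)|\nabla u_r^\epsilon|^2$, which is nonlinear in, and a nonsmooth nonlocal functional of, the gradient. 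Overcoming it requires strong convergence of $\nabla u_r^\epsilon$, obtained by choosing $\delta$ so that $k-\delta>\tfrac n2+1$ (whence $H^{k-\delta}\hookrightarrow C^1$), together with the continuity of $\theta_r$, so that the $L^\infty$ cut-off passes correctly to the limit.
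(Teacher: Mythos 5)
Your overall architecture coincides with the paper's: hyper-regularisation by a high-order dissipative operator, a Banach fixed point for the mild formulation with the cut-off supplying Lipschitz bounds, uniform $H^k$ energy estimates exploiting the Kato--Ponce inequality and absorption into the viscosity, a fractional-in-time estimate, tightness via Simon's variant of Aubin--Lions, Prokhorov--Skorokhod, and Gy\"ongy--Krylov combined with pathwise uniqueness. Two steps, however, do not survive scrutiny as written. The main one is in your uniqueness argument: you assert that the cut-off makes $u\mapsto\theta_r(\|\nabla u\|_\infty)|\nabla u|^2$ \emph{globally} Lipschitz from $H^k$ into $H^{k-1}$ and then close the estimate by ``taking expectations and invoking Gronwall''. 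The cut-off caps only $\|\nabla u\|_\infty$, not $\|u\|_{H^k}$: writing $w=u_1-u_2$, Kato--Ponce gives $\||\nabla u_1|^2-|\nabla u_2|^2\|_{H^{k-1}}\lesssim\|\nabla(u_1+u_2)\|_{\infty}\|w\|_{H^k}+\|u_1+u_2\|_{H^k}\|\nabla w\|_{\infty}$, and the second term carries the full $H^k$ norm of the solutions (take $u_1$ highly oscillatory with small gradient amplitude but large $H^k$ norm and $u_2=u_1+\psi$ to see the Lipschitz ``constant'' blow up). Consequently the Gr\"onwall coefficient is of the form $1+\|u_1\|_{H^k}^2+\|u_2\|_{H^k}^2$, a random and a priori unbounded process, so a deterministic Gr\"onwall after taking expectations does not apply. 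The paper resolves this by localising with the stopping times $\tau_N=\inf\{t:\|u_1\|_{H^k}+\|u_2\|_{H^k}\ge N\}$ (whose divergence rests on the a priori bound from the existence part) together with the exponential-weight device $\exp(X_t)\|w(t)\|_{H^k}^2$, $X_t=-\int_0^t(1+\|u_1\|_{H^k}+\|u_1\|_{H^k}^2+\|u_2\|_{H^k}+\|u_2\|_{H^k}^2)\,\mathrm{d}s$, after which the expectation of the weighted quantity vanishes. You need one of these devices; without it the uniqueness proof is incomplete.

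The second, more minor, slip is the choice of space for the time-increment estimate: it cannot be taken in $\mathscr{W}^{\alpha,p}([0,T];H^{k-1})$, because the hyper-regularising term $\epsilon\Lambda^{2\gamma}u_r^{\epsilon}$ (of order $2\gamma\gg 1$) is controlled by $\|u_r^{\epsilon}\|_{H^k}$ only when measured in $H^{-M}$ with $M$ sufficiently large relative to the order of the regularisation; this is why the paper works in $H^{-M}$ with $M=3[k']+2$. Simon's lemma is indifferent to how negative the third space in the triple is, so the fix is painless but necessary. Relatedly, to run the increment estimate and the Chebyshev bounds in the tightness step the paper needs the fourth-moment bound $E[\sup_t\|u_r^{\gamma}\|_{H^k}^4]\le C$ uniformly in the regularisation parameter, not merely the second moment you state.
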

The proof the theorem is split in two parts, where we prove uniqueness of global solutions and existence of global solutions in the subsections \ref{subsec:uniqueness} and \ref{subsec:existence} respectively.

\subsection{Global Uniqueness of Solution to Truncated Equation}\label{subsec:uniqueness}
Denote by $u_1$ and $u_2$ the two global solutions of \eqref{truncated} defined on $\Omega \times [0, \infty) \times \mathbb{T}^n$ with the same initial data $u_0$, define the stopping time  $\tau_N= \inf \{ t \ge 0 : \| u_1\|_{H^k}+\|u_2\|_{H^k} \ge N\}$. We prove the uniqueness in the $H^k$ topology i.e. $\|u_1(s)-u_2(s)\|_{H^k}=0$ for all $s\in [0,T]$. First, notice that  $\tau_N \to \infty$ as $N \to \infty$ because $u_i \in  L^\infty([0,T];H^k(\mathbb{T}^n))\,$ a.s. from the \textit{a priori} estimate proven in the next section, therefore it suffices to show that $\|u_1(s)-u_2(s)\|_{H^k}=0$ on $[0, \tau_N \wedge T] \,\,\mathbb{P}$-a.s. for all $N$. Denote by $u:=u_1-u_2$, then $u$ satisfies the following equation
\begin{align*}   
\mathrm{d}u(s,x)&-\sum^n_{i=1}\mathscr{L}_iu(s,x) \mathrm{d}W_i(s) -\mu \Delta u(s,x)\mathrm{d}s-\frac12\sum^n_{i=1}\mathscr{L}^2_iu(s,x)\mathrm{d}s\\
    &=-\frac12\left(\theta_r(\|\nabla u_1(s)\|_\infty)\vert\nabla u_1(s,x)\vert^2 -\theta_r(\|\nabla u_2(s)\|_\infty)\vert\nabla u_2(s,x)\vert^2 \right)\mathrm{d}s.
\end{align*}
Applying It\^o's formula to $\| \cdot\|_{L^2}$, we get 
\begin{align*}
    &\frac12 \mathrm{d}\| u(s)\|^2_{L^2}- \sum^n_{i=1} \langle \mathscr{L}_i u(s), u(s)\rangle_{L^2}\mathrm{d}W_i(s)-\frac12\sum^n_{i=1}\left( \langle \mathscr{L}_i u(s),\mathscr{L}_i u(s)\rangle_{L^2}+ \langle \mathscr{L}^2_i u(s), u(s)\rangle_{L^2}\right) \mathrm{d}s \\
&=\left(\mu \langle \Delta u(s),u(s) \rangle_{L^2}+\frac12\left\langle \theta_r(\|\nabla u_1(s)\|_\infty)\vert\nabla u_1(s)\vert^2 -\theta_r(\|\nabla u_2(s)\|_\infty)\vert\nabla u_2(s)\vert^2 , u(s)\right\rangle_{L^2}\right)\mathrm{d}s.
\end{align*}
Making use of the fact $\langle \Delta u, u\rangle_{L^2} \le 0$ and Lemma \ref{operator}, we write the above equation as
\begin{align}\label{ito-l2}
     &\mathrm{d} \| u(s)\|^2_{L^2}- 2\sum^n_{i=1}\langle \mathscr{L}_i u(s), u(s)\rangle_{L^2}\mathrm{d}W_i(s)-\|u(s)\|^2_{L^2}\mathrm{d}s \notag\\
     &\qquad\lesssim  \left\langle \theta_r(\|\nabla u_1(s)\|_\infty)\vert\nabla u_1(s,x)\vert^2-\theta_r(\|\nabla u_2(s)\|_\infty)\vert\nabla u_2(s,x)\vert^2 , u(s)\right\rangle_{L^2}\mathrm{d}s.
\end{align}
To handle the nonlinearty on the RHS of \eqref{ito-l2}, we write it as
\begin{align*}
    &\left\langle \theta_r(\|\nabla u_1\|_\infty)\vert\nabla u_1\vert^2 -\theta_r(\|\nabla u_2\|_\infty)\vert\nabla u_2\vert^2, u\right\rangle_{L^2}\\
    &=\left\langle \left(\theta_r(\|\nabla u_1\|_\infty) -\theta_r(\|\nabla u_2\|_\infty)\right)\vert\nabla u_1\vert^2, u\right\rangle_{L^2}+ \left\langle \theta_r(\| \nabla u_2\|_\infty)\nabla u_1 \cdot \nabla u, u \right\rangle_{L^2}\\
    &\qquad+\left\langle \theta_r(\| \nabla u_2\|_\infty)\nabla u_2 \cdot \nabla u, u \right\rangle_{L^2}\\
    &\le |\theta_r(\|\nabla u_1\|_\infty) -\theta_r(\|\nabla u_2\|_\infty)| |\nabla u_1|^2_\infty \|u\|_{L^2}+|\left\langle \nabla u_1 \cdot \nabla u, u \right\rangle_{L^2}|+|\left\langle \nabla u_2 \cdot \nabla u, u \right\rangle_{L^2}|\\
  &  \lesssim N^2\|u\|_{H^k}\|u\|_{L^2}+\|u_1\|_{H^k}\|u\|^2_{L^2}+\|u_2\|_{H^k}\|u\|^2_{L^2}\\
  & \lesssim (1+ \|u_1\|_{H^k}+\|u_2\|_{H^k})\|u\|^2_{H^k}
\end{align*}
where in the penultimate inequality  we use \begin{align}\label{IBP-unique}
\langle \nabla u_i\cdot \nabla u,u \rangle_{L^2} = \frac12\langle - \Delta u_i, u^2\rangle_{L^2} \lesssim \|\Delta u_i \|_{L^\infty} \|u\|^2_{L^2} \lesssim \|u_i\|_{H^k} \|u\|^2_{L^2}, 
\end{align}
for $i=1,2$ and $k>2$ and the fact that $\theta_r \le 1$  and $\theta_r$ is Lipschitz continuous as follows
\begin{align*}
    |\theta_r(\|\nabla u_1\|_\infty) -\theta_r(\|\nabla u_2\|_\infty)| \le C| \|\nabla u_1\|_\infty- \| \nabla u_2\|_\infty| \le C \|\nabla u\|_\infty \le C \| u\|_{H^k}.
\end{align*}
Substituting this in \eqref{ito-l2} we get 
\begin{align}\label{uniquel2}
\mathrm{d}\|u(s)\|^2_{L^2}- 2\sum^n_{i=1}\langle \mathscr{L}_i u(s), u(s)\rangle_{L^2}\mathrm{d}W_i(s) \lesssim (1+ \|u_1\|_{H^k}+\|u_2\|_{H^k})\|u(s)\|^2_{H^k}\mathrm{d}s.
\end{align}
Next, we derive an estimate for the semi-norm; recall that $\Lambda^k = (-\Delta)^{k/2}$ and consider
\begin{align*}
   & \frac12 \mathrm{d}\| \Lambda^k u\|^2_{L^2}- \sum^n_{i=1}\langle  \Lambda^k \mathscr{L}_i u,\Lambda^k u\rangle_{L^2}\mathrm{d}W_i(s)-\frac12\sum^n_{i=1}\left( \langle \Lambda^k \mathscr{L}_i u(s),\Lambda^k \mathscr{L}_iu(s)\rangle_{L^2}+ \langle \Lambda^k \mathscr{L}^2_i u(s),\Lambda^k u(s)\rangle_{L^2}\right) \mathrm{d}s\\
&=-\mu \langle \Delta \Lambda^k u(s), \Lambda^k u(s) \rangle_{L^2}+\frac12\left\langle \left(\theta_r(\|\nabla u_1\|_\infty)\Lambda^k\vert\nabla u_1\vert^2 \right)-\left(\theta_r(\|\nabla u_2\|_\infty)\Lambda^k\vert\nabla u_2\vert^2 \right), \Lambda^k u\right\rangle_{L^2}\mathrm{d}s.
\end{align*}
Again using Lemma \ref{operator} and the fact that $\langle \Lambda^k \Delta u, \Lambda^k u \rangle_{L^2}= -\| \Lambda^{k+1} u\|^2_{L^2}$, we get
\begin{align}\label{ito-hk}
    &\mathrm{d}\| \Lambda^k u\|^2_{L^2}- 2\sum^n_{i=1}\langle  \Lambda^k \mathscr{L}_i u,\Lambda^k u\rangle_{L^2}\mathrm{d}W_i(s) \notag \\
    & \lesssim \left(\|u(s)\|^2_{H^k}+ \left\langle \left(\theta_r(\|\nabla u_1(s)\|_\infty)\Lambda^k\vert\nabla u_1(s,x)\vert^2 \right)-\left(\theta_r(\|\nabla u_2(s)\|_\infty)\Lambda^k\vert\nabla u_2(s,x)\vert^2 \right), \Lambda^k u\right\rangle_{L^2}\right)\mathrm{d}s
\end{align}
The nonlinearity on the RHS of \eqref{ito-hk} can be written as
 \begin{align*}
 &\left\langle \theta_r(\|\nabla u_1\|_\infty)\Lambda^k\vert\nabla u_1\vert^2 -\theta_r(\|\nabla u_2\|_\infty)\Lambda^k\vert\nabla u_2\vert^2, \Lambda^k u\right\rangle_{L^2}\\
    &=\left\langle \left(\theta_r(\|\nabla u_1\|_\infty) -\theta_r(\|\nabla u_2\|_\infty)\right)\Lambda^k\vert\nabla u_1\vert^2, \Lambda^k u\right\rangle_{L^2}+ \left\langle \theta_r(\| \nabla u_2\|_\infty)\Lambda^k (\nabla u_1 \cdot \nabla u), \Lambda^k u \right\rangle_{L^2}\\
    &\qquad+\left\langle \theta_r(\| \nabla u_2\|_\infty)\Lambda^k(\nabla u_2 \cdot \nabla u), \Lambda^k u \right\rangle_{L^2}.
\end{align*}
The first term can be estimated as 
\begin{align*}
    &\left|\left\langle \left(\theta_r(\|\nabla u_1\|_\infty) -\theta_r(\|\nabla u_2\|_\infty)\right)\Lambda^k\vert\nabla u_1\vert^2, \Lambda^k u\right\rangle_{L^2}\right|\le \|u\|_{H^k}\left| \langle \Lambda^{k-1}|\nabla u_1|^2, \Lambda^{k+1} u \rangle_{L^2}\right|\\
    &\qquad\le \|u\|_{H^k}\| \Lambda^{k-1}(\nabla u_1 \cdot \nabla u_1)\|_{L^2}\| \Lambda^{k+1} u\|^2_{L^2}\lesssim \|u\|_{H^k}\| \nabla u_1\|_\infty \|\Lambda^k u_1\|_{L^2}\|\Lambda^{k+1}u\|_{L^2}\\
    &\qquad \lesssim N^2 \|u\|_{H^k}\|\Lambda^{k+1}u\|_{L^2} \lesssim \frac{1}{\varepsilon}\|u\|^2_{H^k}+ \varepsilon \| \Lambda^{k+1} u \|^2_{L^2},
\end{align*}
where we have used Lemma \ref{kato} in the third inequality. Concerning the second term we have, 
\begin{align*}
    \left|\left\langle \theta_r(\| \nabla u_2\|_\infty)\Lambda^k (\nabla u_1 \cdot \nabla u), \Lambda^k u \right\rangle_{L^2}\right|&\le (\| \nabla u_1\|_\infty\|\Lambda^{k-1} \nabla u\|_{L^2}+\| \Lambda^{k-1}\nabla u_1\|_{L^2}\| \nabla u\|_\infty )\|\Lambda^{k+1} u \|_{L^2}\\
    &\le \| u_1\|_{H^k}\|u\|_{H^k}\| \Lambda^{k+1} u\|_{L^2} \lesssim \frac{1}{\varepsilon}\| u_1\|^2_{H^k}\|u\|^2_{H^k} + \varepsilon \|\Lambda^{k+1} u\|^2_{L^2}.
\end{align*}
Similarly, the third term gives 
\begin{align*}
    \left|\left\langle \theta_r(\| \nabla u_2\|_\infty)\Lambda^k (\nabla u_2 \cdot \nabla u), \Lambda^k u \right\rangle_{L^2}\right|&\le (\| \nabla u_2\|_\infty\|\Lambda^{k-1} \nabla u\|_{L^2}+\| \Lambda^{k-1} u_2\|_{L^2}\| \nabla u\|_\infty )\|\Lambda^{k+1} u \|_{L^2}\\
    &\le \| u_2\|_{H^k}\|u\|_{H^k}\| \Lambda^{k+1} u\|_{L^2} \lesssim \frac{1}{\varepsilon}\| u_2\|^2_{H^k}\|u\|^2_{H^k} + \varepsilon \|\Lambda^{k+1} u\|^2_{L^2}.
\end{align*}
Adding the three estimates and choosing $\varepsilon$ such that $3 \varepsilon < \mu$, we get from \eqref{ito-hk}
\begin{align}\label{est2}
    \mathrm{d}\|\Lambda^k u\|^2_{L^2} - 2\sum^n_{i=1}\langle  \Lambda^k \mathscr{L}_i u,\Lambda^k u\rangle_{L^2}\mathrm{d}W_i(s)\le (1+\|u_1\|^2_{H^k}+\|u_2\|^2_{H^k})\|u\|^2_{H^k}\mathrm{d}s.
\end{align}
Finally, we add \eqref{uniquel2} and \eqref{est2} to obtain
\begin{align}\label{est3}
    \mathrm{d}\|u\|^2_{H^k} &\lesssim \sum^n_{i=1}\left( \langle  \mathscr{L}_i u, u\rangle_{L^2}+ \langle  \Lambda^k \mathscr{L}_i u,\Lambda^k u\rangle_{L^2}\right)\mathrm{d}W(s)\notag \\
&\qquad+(1+\|u_1\|_{H^k}+\|u_1\|_{H^k}^2+\|u_2\|_{H^k}+\|u_2\|_{H^k}^2) \|u\|^2_{H^k}\mathrm{d}s.
\end{align}
Defining $X_t= -\int^t_0 (1+\|u_1\|_{H^k}+\|u_1\|_{H^k}^2+\|u_2\|_{H^k}+\|u_2\|_{H^k}^2)\mathrm{d}s $, we rewrite \eqref{est3} in Gr\"{o}nwall's form
\begin{align*}
exp(X_t)\|u(t)\|^2_{H^k} \lesssim \sum^n_{i=1}\int^t_0 exp(X_s)\left( \langle  \mathscr{L}_i u, u\rangle_{L^2}+ \langle  \Lambda^k \mathscr{L}_i u,\Lambda^k u\rangle_{L^2}\right) \mathrm{d}W_i(s).
\end{align*}
Taking expectation allows us to conclude
$\mathbb{E} \left(exp(X_t)\|u(t)\|^2_{H^k} \right) \le 0$ and this implies $\|u(t)\|^2_{H^k}=0$ a.s. as $X_t$ is finite and thus the uniqueness follows.
\subsection{Global Existence of Solution to Truncated Equation}\label{subsec:existence}
To show the well-posedness of the truncated equation \eqref{truncated}, we consider the following approximation 
\begin{align*}
\mathrm{d}u^\gamma_r(s,x)&=\left(V(x)+\mu\Delta u^\gamma_r(s,x)-\frac12\theta_r(\|\nabla u^\gamma_r(s)\|_\infty)\vert\nabla u^\gamma_r(s,x)\vert^2\right)\mathrm{d}s \\
&\qquad + \gamma \Delta^{k'} u^\gamma_r(s,x)\mathrm{d}s+\sum^n_{i=1}\mathscr{L}_i u^\gamma_r(s,x) \circ \mathrm{d}W_i(s)
\end{align*}
where $\gamma>0$ and $k' = 2[k]+1$. One can re-write it in It\^{o}'s form as follows
\begin{align}\label{regular}
\mathrm{d}u^\gamma_r(s,x)&= \left(V(x)+\mu \Delta u^\gamma_r(s,x)+ \frac12 \sum^n_{i=1}\mathscr{L}_i^2 u^\gamma_r(s,x)-\frac12\theta_r(\|\nabla u^\gamma_r\|_\infty)\vert\nabla u^\gamma_r(s,x)\vert^2\right)\mathrm{d}s \notag\\
    &\qquad + \gamma \Delta^{k'} u^\gamma_r(s,x)\mathrm{d}s+\sum^n_{i=1}\mathscr{L}_i u^\gamma_r(s,x)\mathrm{d}W_i(s)
\end{align}
We understand Eq. \eqref{regular} in the mild sense (see \cite[Chapter 6]{daprato}). Since we are required to perform higher-order computations like $\Lambda^k \Delta u$, we need the regularization by $\Delta^{k'}$ in Eq. \eqref{regular}. We now present the well-posedness result for Eq. \eqref{regular}.

\begin{prop}\label{WP-regularised}
Let $\gamma$ and $r$ be two positive numbers. For an initial data $v_0\in H^k(\mathbb{T}^n)$, there exists a unique global strong solution $u^\gamma_r$ of \eqref{regular} taking values in $L^2(\Omega; C([0,T]; H^k(\mathbb{T}^n))$. Additionally, its paths have extra regularity $C([\delta,T];H^{k+2}(\mathbb{T}^n))$, for every $0<\delta<T$.
\end{prop}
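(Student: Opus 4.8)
The plan is to read \eqref{regular} as a semilinear stochastic evolution equation governed by the analytic semigroup generated by the hyper–dissipative part, and to solve it by a Banach fixed–point argument in the mild formulation, using the strong parabolic smoothing to absorb the one–derivative loss present in both the nonlinearity and the noise. First I would set $A := \gamma\Delta^{k'} + \mu\Delta + \tfrac12\sum_{i=1}^n \mathscr{L}_i^2$ and verify that $A$ generates an analytic semigroup $\{S(t)\}_{t\ge0}$ on $H^k(\mathbb{T}^n)$. The leading term is decisive: since $k'=2[k]+1$ is \emph{odd}, $\Delta^{k'}$ has Fourier symbol $-\gamma|\xi|^{2k'}\le0$, so $\gamma\Delta^{k'}$ is strongly dissipative of order $2k'$; the lower–order pieces $\mu\Delta$ and $\tfrac12\sum_i\mathscr{L}_i^2$ (with the coefficient regularity from \ref{A3}) are relatively bounded perturbations of strictly smaller order, so by the perturbation theory for analytic semigroups recalled in Appendix \ref{app}, $A$ still generates an analytic semigroup after a harmless shift $A-\lambda I$ handling the constant mode on the torus. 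The quantitative input I will use repeatedly is the smoothing estimate $\|S(t)\|_{\mathcal{L}(H^{k-1},H^{k+\sigma})}\lesssim t^{-(1+\sigma)/(2k')}$ for $t\in(0,T]$, $\sigma\ge0$, reflecting that $A$ has order $2k'$.

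Next I would record the mapping properties of the coefficients. Writing $F(u):=V-\tfrac12\theta_r(\|\nabla u\|_\infty)|\nabla u|^2$ and $G_i(u):=\mathscr{L}_i u$, I claim $F:H^k\to H^{k-1}$ is globally Lipschitz and each $G_i:H^k\to H^{k-1}$ is linear and bounded. For $F$ the truncation is essential: on $\{\theta_r\ne0\}$ one has $\|\nabla u\|_\infty\le r+1$, and combining this with the product estimate $\||\nabla u|^2\|_{H^{k-1}}\lesssim\|\nabla u\|_\infty\|\nabla u\|_{H^{k-1}}$, the Lipschitz continuity of $\theta_r$, and the Kato–Ponce type estimates of Lemma \ref{kato} already exploited in Subsection \ref{subsec:uniqueness}, gives $\|F(u_1)-F(u_2)\|_{H^{k-1}}\lesssim_r\|u_1-u_2\|_{H^k}$ uniformly; boundedness of $G_i$ follows from $a_i,b_i\in W^{k+1,\infty}$. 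Both coefficients lose exactly one derivative, which is what the $2k'$–smoothing can restore. I then define
$$
(\mathcal{T}u)(t)=S(t)v_0+\int_0^t S(t-s)F(u(s))\,\mathrm{d}s+\sum_{i=1}^n\int_0^t S(t-s)G_i(u(s))\,\mathrm{d}W_i(s)
$$
and show it is a contraction on $L^2(\Omega;C([0,T];H^k))$. The deterministic convolution is controlled by $\int_0^t(t-s)^{-1/(2k')}\,\mathrm{d}s<\infty$ and the Lipschitz bound; for the stochastic convolution I use the factorization method (Appendix \ref{app}), choosing $\alpha\in(0,\tfrac12)$ with $\alpha+1/(2k')<\tfrac12$ — possible since $k'=2[k]+1\ge5$ makes $1/(2k')$ tiny — to bound $\mathbb{E}\sup_{t\le T}\|\cdot\|_{H^k}^2$ by a constant times $\int_0^T\mathbb{E}\|G(u(s))\|_{H^{k-1}}^2\,\mathrm{d}s$. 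Because the coefficients are \emph{globally} Lipschitz, the contraction constant depends only on $T$ and $r$, so iterating over finitely many uniform short subintervals yields a unique global mild solution in $L^2(\Omega;C([0,T];H^k))$ for every fixed $T$; uniqueness is inherited from the contraction together with a Gronwall argument.

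For the additional regularity I would bootstrap with the stronger smoothing $\|S(t)\|_{\mathcal{L}(H^{k-1},H^{k+2})}\lesssim t^{-3/(2k')}$, whose kernel is integrable near $s=0$ since $3/(2k')<1$ for $k'\ge5$. For $t\ge\delta>0$ the term $S(t)v_0$ already lies in $H^{k+2k'}\hookrightarrow H^{k+2}$, the deterministic convolution gains two derivatives through this integrable kernel, and the stochastic convolution is again treated by factorization with the same integrability margin, producing $u^\gamma_r\in C([\delta,T];H^{k+2})$ almost surely.

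The main obstacle is the stochastic convolution estimate in the top topology $H^k$: the noise coefficient $G_i$ loses a derivative, so one must verify that the $2k'$–order smoothing of $S$ simultaneously restores that derivative and leaves enough room for the fractional–integral (factorization) exponents needed to upgrade a mean–square bound to the pathwise $C([0,T];H^k)$ control that closes the fixed point. This is precisely where the choice $k'=2[k]+1$ is used — it guarantees both $1/k'<1$ and $3/(2k')<1$ — and it is the step whose integrability bookkeeping must be arranged most carefully.
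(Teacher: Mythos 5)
Your proposal is correct and follows essentially the same route as the paper: both recast \eqref{regular} in mild form with respect to the analytic semigroup generated by the hyper-dissipative operator $\gamma\Delta^{k'}$, run a Banach fixed-point argument in $L^2(\Omega;C([0,T];H^k(\mathbb{T}^n)))$ using the truncation to make the nonlinearity globally Lipschitz, and then bootstrap with the parabolic smoothing estimates to obtain the $C([\delta,T];H^{k+2}(\mathbb{T}^n))$ regularity. The only (harmless) organizational differences are that the paper keeps $\mu\Delta+\tfrac12\sum_i\mathscr{L}_i^2$ as forcing terms measured in $L^2(\mathbb{T}^n)$ rather than folding them into the generator, measures the Lipschitz nonlinearity $F_1$ from $H^k$ into $L^2$ instead of $H^{k-1}$, and controls the stochastic convolution via the maximal inequality of Lemma \ref{frac} rather than the factorization method.
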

\begin{proof}
    We use the fixed point iteration argument to construct a solution, where we drop the scripts $\gamma$ and $r$ for convenience. Let $S(t)$ denote the semi-group generated by the operator $A:=\gamma \Delta^{k'}$ and write the mild formulation of \eqref{regular} as follows
    $$u(t)=(Fu)(t),$$
    where,    \begin{align}\label{mild}
        &(Fu)(t)\notag\\
        &= S(t)u_0 -\frac12\int^t_0S(t-s)\underbrace{\theta_r(\|\nabla u(s)\|_\infty)\vert\nabla u(s,x)\vert^2}_{:=F_1u} ~\mathrm{d}s \notag\\
        &~~~+ \int^t_0 S(t-s)\underbrace{\left( V(x)+\mu \Delta u(s)+\frac12\sum^n_{i=1}\mathscr{L}^2_i u(s) \right)}_{:=F_2u}~\mathrm{d}s +\sum^n_{i=1} \int^t_0 S(t-s)\mathscr{L}_i u(s)~\mathrm{d}W_i(s).
    \end{align}
    We claim that the map $F$ is a contraction on $\mathcal{W}_T:=L^2(\Omega; C([0,T];H^k(\mathbb{T}^n)))$. To prove this, first we need to make sure that $Fu \in \mathcal{W}_T$ for each $u \in \mathcal{W}_T$. Therefore let us examine each term in \eqref{mild} for $u \in \mathcal{W}_T$.
    \begin{itemize}
        \item[-] For the first term note that $S(t)$ is bounded in the space $H^k(\mathbb{T}^n)$, from the appendix we have $\| S(t) u\|_{H^k(\mathbb{T}^n)} \le C_\alpha \|(I-A)^\alpha S(t) u\|_{L^2(\mathbb{T}^n)} \le C/t^\alpha \|u\|_{L^2(\mathbb{T}^n)}$. This implies that $S(t)u_0 \in \mathcal{W}_T$.
        \item [-] For the term $F_1u$, we claim that the map $u \mapsto F_1u$ is Lipschitz continuous from $H^k(\mathbb{T}^n)$ to $L^2(\mathbb{T}^n)$. To see this we follow \cite{crisan}; let $u_1$ and $u_2$ be two elements from $H^k(\mathbb{T}^n)$. Let $B(0,r)$ denote the ball of radius r centered at zero in $H^k(\mathbb{T}^n)$ and consider the following cases:
        \begin{itemize}
            \item[(1)] When both of them are outside $B(0,r+1)$. This is trivial, as the  cut-off function vanishes and $F_1(u_1)=0=F_1(u_2)$.
            \item[(2)] When one of them is in $B(0,r+1)$, say $u_1$, and other outside $B(0,r+1)$. Then $\|\nabla u_1\|_\infty \le C\|u_1\|_{H^k(\mathbb{T}^n)} \le C(r+1)$, therefore $\|F_1 u_1\|_{L^2(\mathbb{T}^n)} \le C_r$.  Also as $F_1 u_2=0$ and $\|u_1-u_2\|_{H^k(\mathbb{T}^n)} \ge c_r$ for some $c_r>0$ we get 
            \begin{align*}
                \| F_1(u_1)-F_1(u_2)\|_{L^2(\mathbb{T}^n)} = \| F_1(u_1)\|_{L^2(\mathbb{T}^n)} \le \frac{C_r}{c_r} \| u_1- u_2\|_{H^k(\mathbb{T}^n)}.
            \end{align*}
        \item[(3)] When both $u_1,u_2$ are in $B(0,r+1)$, consider 
        \begin{align*}
            F_1(u_1)-F_1(u_2)&= \theta_r(\|\nabla u_1\|_\infty)|\nabla u_1|^2- \theta_r(\| \nabla u_2\|_\infty) | \nabla u_2|^2\\
            &= \theta_r(\|\nabla u_1\|_\infty)\nabla u_1\cdot (\nabla u_1 -\nabla u_2) + (\nabla u_1 -\nabla u_2)\cdot \nabla u_2 \theta_r(\|\nabla u_2\|_\infty)\\
            &\qquad +\nabla u_1 \cdot \nabla u_2( \theta_r(\|\nabla u_1\|_\infty) -\theta_r(\| \nabla u_2\|_\infty))
        \end{align*}
        and therefore
        \begin{align*}
            \|F_1(u_1)-F_1(u_2)\|_{L^2(\mathbb{T}^n)} &\le \theta_r(\|\nabla u_1\|_\infty)\|\nabla u_1\|_\infty\| \nabla(u_1-u_2)\|_{L^2}\\
            &\qquad+ \theta_r(\|\nabla u_2\|_\infty)\|\nabla u_2\|_\infty\| \nabla(u_1-u_2)\|_{L^2}\\
            &\qquad+\|\nabla u_1\|_\infty\|\nabla u_2\|_\infty \|\theta_r(\|\nabla u_1\|_\infty) -\theta_r(\| \nabla u_2\|_\infty)\|_{L^2}\\
            &\le C( 2r\|u_1-u_2\|_{H^k}+ (r+1)^2\| u_1 -u_2\|_{H^k}).
        \end{align*}
        where we use the fact that the map $\theta_r$ is Lipschitz.
        \end{itemize}
     Hence we get that $F_1u \in L^2( \Omega ; C([0,T]; H^k(\mathbb{T}^n)))$ and, using the first inequality from Lemma \ref{frac}, we conclude that $\int^t_0 S(t-s)F_1(u)\,ds \in \mathcal{W}_T$.
\item[-] For $u\in \mathcal{W}_T$, $\left(\mu \Delta u + \sum^n_{i=1}\frac12\mathscr{L}^2_i u\right) \in L^2(\Omega; C([0,T];L^2(\mathbb{T}^n)))$ and using the first inequality from Lemma \ref{frac}, we have $\int^t_0 S(t-s)\Delta u\mathrm{d}s \in \mathcal{W}_T$. Similarly, for $V\in L^2(\mathbb{T}^n), \int^t_0 S(t-s)V \mathrm{d}s \in \mathcal{W}_T$. Thus $\int^t_0 S(t-s)F_2u\mathrm{d}s \in \mathcal{W}_T.$
\item[-] Following \cite{crisan}, we consider the stochastic term in \eqref{mild}, we use the second inequality from Lemma \ref{frac} and the fact that $\| \nabla u\|^2_{L^2} \le \| u\|^2_{H^k(\mathbb{T}^n)}$ to conclude that $\sum^n_{i=1}\int^t_0S(t-s) \mathscr{L}_i u(s) \mathrm{d}W_i(s)\in \mathcal{W}_T$ .
\end{itemize}
This establishes that each term in $F$ is in $\mathcal{W}_T$ and therefore $F$ takes values in $\mathcal{W}_T$. Arguing as in last paragraph one can prove the Lipschitz continuity of $F$ on $\mathcal{W}_T$ and moreover $F$ can be shown to be a contraction for small $T>0$. This implies existence and uniqueness of a local solution where the time interval depends on the $L^2(\Omega; H^k(\mathbb{T}^n))$ norm of $u_0$. To extend this solution to $[0,T]$, we shall show that $u$ satisfies $\sup_{[0,T]} E \left[ \|u(t)\|_{H^k(\mathbb{T}^n)}\right] \le C(T)$. Therefore, let $u$ satisfy $u(t)=(Fu)(t)$ on $[0,T]$, and write
\begin{align*}
    & E \left[ \|u(t)\|^2_{H^k(\mathbb{T}^n)}\right] \\
    &\le 4\left(E\left[\|S(t)u_0\|^2_{H^k(\mathbb{T}^n)}\right] +E\left[\left\|\int^t_0S(t-s)\theta_r(\|\nabla u(s)\|_\infty)\vert\nabla u(s)\vert^2 ~\mathrm{d}s\right\|^2_{H^k(\mathbb{T}^n)}\right]\right.\\
    &\qquad\left.+ E\left[\left\|\int^t_0 S(t-s) \left( V(x)+\mu \Delta u(s)+\frac12\sum^n_{i=1}\mathscr{L}^2_i u(s)\right) ~\mathrm{d}s\right\|^2_{H^k(\mathbb{T}^n)}\right]\right.\\
    &\left.\qquad+ E\left[ \left\| \sum^n_{i=1}\int^t_0 S(t-s)\mathscr{L}_i u(s)~\mathrm{d}W_i(s)\right\|^2_{H^k(\mathbb{T}^n)}\right]\right)
    \end{align*}
    \begin{align*}
    &\le C\left(E\left[\|S(t)u_0\|^2_{H^k(\mathbb{T}^n)}\right] +E\left[\left\|\int^t_0S(t-s)\theta_r(\|\nabla u(s)\|_\infty)\vert\nabla u(s)\vert^2 ~\mathrm{d}s\right\|^2_{H^k(\mathbb{T}^n)}\right]\right.\\
    &\qquad\left.+ E\left[\left\|\int^t_0 S(t-s)  V(x)\right\|^2_{H^k(\mathbb{T}^n)}\right]+E\left[\left\|\int^t_0S(t-s)\mu \Delta u(s)\right\|^2_{H^k(\mathbb{T}^n)}\right]\right.\\
    &\left.+E\left[\left\|\int^t_0 S(t-s)\frac12\sum^n_{i=1}\mathscr{L}^2_i u(s) ~\mathrm{d}s\right\|^2_{H^k(\mathbb{T}^n)}\right]+ E\left[ \left\| \sum^n_{i=1}\int^t_0 S(t-s)\mathscr{L}_i u(s)~\mathrm{d}W_i(s)\right\|^2_{H^k(\mathbb{T}^n)}\right]\right)  \\
        & \le C \left( E \| u_0\|^2_{L^2(\mathbb{T}^n)}+ E\left[ \int^t_0\frac{1}{(t-s)^{\frac{k}{2k'}}}\|\theta_r(\|\nabla u(s)\|_\infty)|\nabla u(s)|^2\|^2_{L^2(\mathbb{T}^n)}\mathrm{d}s\right] \right.\\
        &\qquad\left.+  T^{2-s/k'}E\left[\sup_{[0,T]}\| V(x)\|^2_{L^2(\mathbb{T}^n)}\right]+\mu E\left[ \int^t_0 \frac{1}{(t-s)^{\frac{k}{2k'}}}\|\Delta u(s)\|^2_{L^2(\mathbb{T}^n)}\mathrm{d}s\right] \right.\\
        &\qquad\left.+E\left[\left\|\int^t_0 \sum^n_{i=1}\mathscr{L}^2_i u(s) ~\mathrm{d}s\right\|^2_{H^k(\mathbb{T}^n)}\right]+ E\left[\left\|\int^t_0 \sum^n_{i=1}\mathscr{L}_i u(s) \mathrm{d}W_i(s)\right\|^2_{H^k(\mathbb{T}^n)}\right] \right)\\
        &\le C \left( E \| u_0\|^2_{L^2(\mathbb{T}^n)}+ T^{2-k/k'}\| V(x)\|^2_{L^2(\mathbb{T}^n)}+\left(r+\mu+C(a,b)\right)E\left[ \int^t_0 \frac{1}{(t-s)^{\frac{k}{2k'}}}\|u(s)\|^2_{H^k(\mathbb{T}^n)}\mathrm{d}s \right]\right),
\end{align*}
where to bound the $H^k$ norms we use the inequalities available for fractional powers mentioned in the appendix. Applying Gr\"onwall's inequality we get the desired \textit{a priori} bound
\begin{align*}
    \sup_{[0,T]}E \left[ \|u(t)\|^2_{H^k(\mathbb{T}^n)}\right] \le C.
\end{align*}
We shall now show that the solution $u$ in fact gains higher regularity. Notice that $F_1u$ and $F_2u$ are a.s. in $C([0,T];L^2(\mathbb{T}^n))$ as shown above, thus using the first inequality from Lemma \ref{frac}, we observe that the two Lebesgue integrals in Eq. \eqref{mild} belong to the space $C([0,T]; H^{k+2}(\mathbb{T}^n))$. For the treatment of the stochastic integral in Eq. \eqref{mild}, we make use of the second inequality in Lemma \ref{frac} to conclude that it belongs to $C([0,T];H^{k+2}(\mathbb{T}^n))$ a.s. . Finally, for the first term $S(t)u_0$ in Eq. \eqref{mild}, one can follow the same ideas as in \cite[Lemma 19]{crisan} and make use of properties of the semigroup $S(t)$ to conclude that $S(t)u_0$ belongs to $C([\delta,T]; H^{k+2}(\mathbb{T}^n))$, for every $0 < \delta <T$. Thus we conclude that $u=Fu \in  L^2(\Omega;C([\delta,T]; H^{k+2}(\mathbb{T}^n)))$, for every $0 < \delta <T$.
\end{proof}

\subsubsection{Passing from Regularised to Truncated Equation }\label{compactness}
We now aim to show that the family of solutions $\{u^\gamma_r\}_{\gamma >0}$ to \eqref{regular} is compact in a suitable sense and one can extract a converging sub-sequence that converges to a solution to \eqref{truncated} . We start using It\^{o}'s formula to derive some \textit{a priori} estimates.
In the stochastic setting, because of the presence of the additional variable $\omega \in \Omega$, strong compactness is not expected and one traditionally treats this issue by extracting weak compactness of the laws of the approximate solutions. This is done via tightness and Prokhorov's theorem. We keep the details for later and start by establishing the energy estimates for the approximate solutions $\{u^\gamma_r\}_{\gamma >0}$. 

\begin{prop}\label{apriori}
The following \textit{a priori} estimates hold 
\begin{align}\label{fourth}
\begin{aligned}
   & E \left[ \sup_{t \in [0,T]} \| u^\gamma_r(t) \|^4_{H^k(\mathbb{T}^n)} \right]\le C_1,\\
    &E \left[ \int^T_0 \int^T_0 \frac{\| u^\gamma_r(t)- u^\gamma_r(s)\|^4_{H^{-M}(\mathbb{T}^n)}}{|t-s|^{1+4\alpha}}\mathrm{d}t\mathrm{d}s\right] \le C_2,
    \end{aligned}
\end{align}
for some $\alpha>0, M \in \mathbb{N}$,
uniformly in $\gamma$. 
\end{prop}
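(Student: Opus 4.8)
The plan is to prove both bounds by applying It\^o's formula to the It\^o form \eqref{regular} tested against $u^\gamma_r$ in the $H^k$ inner product, exploiting three structural features: the favourable sign of the viscous and hyperviscous dissipations, the cancellation of the leading-order transport-noise contributions encoded in Lemma \ref{operator}, and the \textit{a priori} control of $\|\nabla u^\gamma_r\|_\infty$ supplied by the cut-off $\theta_r$. Throughout I drop the sub/superscripts and write $u=u^\gamma_r$ and $Y_t:=\|u(t)\|^2_{H^k}\simeq\|u\|^2_{L^2}+\|\Lambda^k u\|^2_{L^2}$.

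\emph{The first estimate.} Applying It\^o's formula to $Y_t$, the hyperviscous term contributes a dissipation $-\gamma\|\Lambda^{k'}\Lambda^k u\|^2_{L^2}\le 0$ which I simply discard; this is exactly where uniformity in $\gamma$ originates (for $\gamma\le 1$). The term $\mu\langle\Delta u,u\rangle_{H^k}$ yields the negative dissipation $-\mu\|\Lambda^{k+1}u\|^2_{L^2}$, and the It\^o correction $\tfrac12\sum_i\|\mathscr{L}_i u\|^2_{H^k}$ combines with the Stratonovich-to-It\^o drift $\tfrac12\sum_i\langle\mathscr{L}_i^2 u,u\rangle_{H^k}$ to be bounded by $C\|u\|^2_{H^k}$ via Lemma \ref{operator}, the top-order pieces cancelling. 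The only genuinely nonlinear contribution, $\langle\Lambda^k(\theta_r|\nabla u|^2),\Lambda^k u\rangle_{L^2}$, is handled precisely as in Subsection \ref{subsec:uniqueness}: the Kato--Ponce estimate (Lemma \ref{kato}) together with the cut-off bound $\theta_r\neq 0\Rightarrow\|\nabla u\|_\infty\le r+1$ controls it by $C_r\|u\|^2_{H^k}+\varepsilon\|\Lambda^{k+1}u\|^2_{L^2}$, and the $\varepsilon$-term is absorbed into the viscous dissipation. This produces $\mathrm{d}Y_t\le C_r(1+Y_t)\,\mathrm{d}t+\mathrm{d}M_t$ with $\mathrm{d}\langle M\rangle_t\lesssim Y_t^2\,\mathrm{d}t$. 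To upgrade to the fourth moment I apply It\^o once more to $Y_t^2$, take $\sup_{[0,T]}$ and then expectation: the drift gives a term controlled by Gr\"onwall through $E\int_0^T Y_s^2\,\mathrm{d}s$, while the martingale $\int_0^\cdot 2Y_s\,\mathrm{d}M_s$ is treated by Burkholder--Davis--Gundy, $E\sup_t|\int_0^t 2Y_s\,\mathrm{d}M_s|\lesssim E(\int_0^T Y_s^4\,\mathrm{d}s)^{1/2}\le E[(\sup_t Y_t)(\int_0^T Y_s^2\,\mathrm{d}s)^{1/2}]$, after which Young's inequality splits off $\tfrac12E\sup_t Y_t^2$ to be absorbed on the left. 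Gr\"onwall then closes the estimate and delivers $C_1$ depending only on $r,\mu,T,\|u_0\|_{H^k},\|V\|_{H^k}$ and the coefficients $a_i,b_i$, but not on $\gamma$.

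\emph{The second estimate.} For the time-regularity bound I work in a sufficiently weak space $H^{-M}$, choosing $M$ large (e.g. $M\ge 2k'-k$) so that every operator in \eqref{regular} maps $H^k$ boundedly into $H^{-M}$; in particular $\|\gamma\Delta^{k'}u\|_{H^{-M}}\lesssim\gamma\|u\|_{H^k}\le\|u\|_{H^k}$ for $\gamma\le 1$, so the high-order regularisation is harmless and uniform in $\gamma$. Writing $u(t)-u(s)$ from the It\^o form, I bound the drift increment by H\"older in time, $\|\int_s^t(\cdots)\,\mathrm{d}\tau\|^4_{H^{-M}}\lesssim|t-s|^3\int_s^t\|(\cdots)\|^4_{H^{-M}}\,\mathrm{d}\tau$, using the first estimate and $\||\nabla u|^2\|_{L^2}\lesssim\|u\|^2_{H^k}$; this yields a contribution of order $|t-s|^4$. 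The stochastic increment is estimated by Burkholder--Davis--Gundy followed by Cauchy--Schwarz in time, $E\|\int_s^t\mathscr{L}_i u\,\mathrm{d}W_i\|^4_{H^{-M}}\lesssim|t-s|^2\sup_\tau E\|u\|^4_{H^k}$. Combining, $E\|u(t)-u(s)\|^4_{H^{-M}}\lesssim C_1|t-s|^2$, and inserting this into the Slobodeckij seminorm gives $\int_0^T\int_0^T|t-s|^{1-4\alpha}\,\mathrm{d}t\,\mathrm{d}s<\infty$ for any $\alpha\in(0,\tfrac12)$, which is the claimed bound with a $\gamma$-independent $C_2$.

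\emph{Main obstacle.} The delicate step is the first estimate, where one must simultaneously extract the top-order dissipation $-\mu\|\Lambda^{k+1}u\|^2_{L^2}$, invoke the cut-off to tame the nonlinearity at the price of a single extra derivative, and absorb the resulting $\varepsilon\|\Lambda^{k+1}u\|^2_{L^2}$ into that dissipation, all while keeping the $\gamma$-dependence one-sided so that the constants are genuinely $\gamma$-free. The fourth-moment passage via Burkholder--Davis--Gundy and the Young-inequality absorption of $E\sup_t Y_t^2$ is standard but must be arranged so that the absorbed term indeed lands on the left-hand side; the $\gamma$-uniformity of both constants is what makes these estimates usable for the tightness argument to follow.
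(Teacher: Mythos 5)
Your proposal is correct and follows the same overall architecture as the paper: It\^o's formula for the $L^2$ and $\Lambda^k$ contributions, Lemma \ref{operator} to cancel the top-order transport-noise terms, Kato--Ponce plus the cut-off to tame the nonlinearity, a BDG/Gr\"onwall closure for the fourth moment, and then the standard increment estimate in $H^{-M}$ (H\"older in time for the drift, BDG for the martingale, $M$ chosen so that $\Delta^{k'}$ is bounded from $H^k$) feeding into the Slobodeckij seminorm for $\alpha<\tfrac12$. The one genuinely different step is where you absorb the $\varepsilon\|\Lambda^{k+1}u\|^2_{L^2}$ remainder from the nonlinear estimate: you send it into the $\mu$-Laplacian dissipation $-\mu\|\Lambda^{k+1}u\|^2_{L^2}$, whereas the paper writes the Young inequality with parameter $\gamma$ and cancels $\gamma\|\Lambda^{k+1}u\|^2_{L^2}$ against the hyperviscous term $-\gamma\|\Lambda^{k+2[k]+1}u\|^2_{L^2}$, at the price of a factor $\tfrac{1}{4\gamma}$ in the Gr\"onwall coefficient (the $C(r,\gamma)$ of \eqref{semi-norm} and \eqref{Hk-estimate}). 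Your variant yields a drift constant $C_r/\mu$ that is manifestly independent of $\gamma$, which is exactly what the tightness argument of Section \ref{compactness} requires and which the paper's bookkeeping obtains less transparently; the paper's variant, on the other hand, would survive the case $\mu=0$. Your fourth-moment passage (It\^o applied to $Y_t^2$ before taking suprema) is a cosmetic variant of the paper's squaring of the Gr\"onwall'd inequality \eqref{squaring-for-BDG}; both require the same bound $\mathrm{d}\langle M\rangle_t\lesssim Y_t^2\,\mathrm{d}t$ on the martingale bracket, which holds because $\langle\Lambda^k\mathscr{L}_iu,\Lambda^ku\rangle_{L^2}\lesssim\|u\|^2_{H^k}$ after the usual skew-symmetry/commutator argument.
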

\begin{proof}
Using \eqref{regular} we first write the evolution of the $L^2$ norm of $u^\gamma_r$
\begin{align*}
        \frac12 \|u^\gamma_r(t)\|^2_{L^2}&= \frac12 \|u^\gamma_r(0)\|^2_{L^2}\\
        &\quad+\int^t_0\left\langle V(x)+\mu\Delta u^\gamma_r(s) -\frac12 \theta_r(\|\nabla u^\gamma_r(s)\|_{L^\infty})| \nabla u^\gamma_r(s)|^2, u^\gamma_r(s) \right\rangle_{L^2}\,ds\\
    &\quad +\sum^n_{i=1}\int^t_0 \langle \mathscr{L}_i u^\gamma_r(s),u^\gamma_r(s) \rangle_{L^2} \,dW_i(s)+\int^t_0 \gamma \langle \Delta^{k'} u^\gamma_r(s), u^\gamma_r(s) \rangle_{L^2}\,ds\\
&\quad+\frac12\sum^n_{i=1}\int^t_0\left(\langle \mathscr{L}^2_i u^\gamma_r(s),u^\gamma_r(s) \rangle_{L^2} + \langle \mathscr{L}_i u^\gamma_r(s), \mathscr{L}_i u^\gamma_r(s) \rangle_{L^2}\right)\,ds\\
    &\lesssim \frac12 \|u^\gamma_r(0)\|^2_{L^2}+\int^t_0\|V\|_{L^2} \|u^\gamma_r(s)\|_{L^2}\,ds-\mu \int^t_0 \| \nabla u^\gamma_r(s)\|^2_{L^2}\,ds+\frac12\int^t_0 (r+1)^2\|u^\gamma_r(s)\|_{L^2}\,ds\\
    &\quad+\sum^n_{i=1}\int^t_0 \langle \mathscr{L}_i u^\gamma_r(s),u^\gamma_r(s) \rangle_{L^2} \,dW_i(s)-\gamma \int_0^t \| \Delta^{-k'/2} u^\gamma_r(s)\|^2_{L^2}\,ds+\int^t_0 \|u^\gamma_r(s)\|^2_{L^2}\mathrm{d}s,
\end{align*}
where we used Lemma \ref{operator} to estimate the last two terms in the equality. Moreover, using $ab \le \frac{a^2+b^2}{2}$ for $a,b \in \mathbb{R}$ and dropping the negative terms, we can write
\begin{align}\label{Ltwo}
\|u^\gamma_r(t)\|^2_{L^2}
\lesssim \|u^\gamma_r(0)\|^2_{L^2}+ T\left(\|V\|^2_{L^2}+ (r+1)^4/2\right) + \int^t_0 \| u^\gamma_r(s)\|^2_{L^2}\,ds+\sum^n_{i=1}\int^t_0 \langle \mathscr{L}_i u^\gamma_r(s),u^\gamma_r(s) \rangle_{L^2} \,dW_i(s).
\end{align}
Next, using the regularity from Proposition \ref{WP-regularised}, we can write the following identity using \eqref{regular}  
\begin{align*}
\mathrm{d}\Lambda^ku^\gamma_r(s,x)&= \sum^n_{i=1}\Lambda^k\mathscr{L}_i\nabla u^\gamma_r(s,x) \mathrm{d}W(s)+ \gamma \Lambda^k\Delta^{k'} u^\gamma_r\mathrm{d}s\\
&+\left(\Lambda^kV(x)+\mu\Lambda^k\Delta u^\gamma_r(s,x)+\frac12 \sum^n_{i=1}\Lambda^k\mathscr{L}_i^2 u^\gamma_r(s,x)-\frac12\theta_r(\|\nabla u^\gamma_r\|_\infty)\Lambda^k\vert\nabla u^\gamma_r(s,x)\vert^2\right)\mathrm{d}s,
    \end{align*}
therefore the evolution of $\|\cdot\|_{H^k}$ of $u$ is as follows 
\begin{align}\label{fract}
\begin{aligned}
\frac{1}{2}\left\|\Lambda^{k} u_{r}^{\gamma}(t)\right\|_{L^{2}}^{2}&=\frac{1}{2}\left\|\Lambda^{k} u_{r}^{\gamma}(0)\right\|_{L^{2}}^{2}-\frac12\int_{0}^{t}\left\langle\theta_{r}\left(\left\|\nabla u_{r}^{\gamma}(s)\right\|_{L^{\infty}}\right) \Lambda^{k}\left(|\nabla u_{r}^{\gamma}(s)|^2\right), \Lambda^{k} u_{r}^{\gamma}(s)\right\rangle_{L^{2}} \mathrm{d}s \\
&+\sum^n_{i=1}\int_{0}^{t}\left\langle\Lambda^{k} \mathscr{L}_i u_{r}^{\gamma}(s), \Lambda^k u_{r}^{\gamma}(s)\right\rangle_{L^{2}} \mathrm{~d} W_i(s)+\int_{0}^{t}\left\langle\gamma \Lambda^k \Delta^{k'} u_{r}^{\gamma}(s), \Lambda^k u_{r}^{\gamma}(s)\right\rangle_{L^{2}} \mathrm{d}s \\
&+\mu \int_{0}^{t}\left\langle\Lambda^k \Delta u_{r}^{\gamma}(s), \Lambda^k u_{r}^{\gamma}(s)\right\rangle_{L^{2}} \mathrm{d}s+\frac{1}{2} \int_{0}^{t}\left\langle\Lambda^k V(x), \Lambda^k  u_{r}^{\gamma}(s)\right\rangle_{L^{2}} \mathrm{d} s\\
& +\frac12 \sum^n_{i=1}\int^t_0\langle\Lambda^k \mathscr{L}^2_i u^\gamma_r(s), \Lambda^k u^\gamma_r(s) \rangle_{L^2}\mathrm{d}s +\frac12 \sum^n_{i=1}\int^t_0 \langle \Lambda^k \mathscr{L}_i u^\gamma_r(s),\Lambda^k \mathscr{L}_i u^\gamma_r(s) \rangle \mathrm{d}s.
\end{aligned}
\end{align}
To estimate the nonlinear term we notice that 
\begin{align*}
\left| \left\langle \Lambda^{k}\left(|\nabla u_{r}^{\gamma}|^2\right), \Lambda^{k} u_{r}^{\gamma}\right\rangle_{L^{2}}\right| &\le \| \Lambda^{k}\left(|\nabla u_{r}^{\gamma}|^2\right)\|_{L^2} \|\Lambda^{k} u_{r}^{\gamma}\|_{L^2}\\
&\lesssim \|\Lambda^{k}\left(\nabla u_{r}^{\gamma}\right)\|_{L^2}\|\nabla u_{r}^{\gamma}\|_{L^\infty}\|\Lambda^{k} u_{r}^{\gamma}\|_{L^2}\\
&\lesssim  \|\Lambda^{k+1} u_{r}^{\gamma}\|_{L^2}\|\nabla u_{r}^{\gamma}\|_{L^\infty}\|\Lambda^{k} u_{r}^{\gamma}\|_{L^2}\\
&\lesssim \frac{1}{4 \gamma} \| \nabla u^\gamma_r\|^2_{L^\infty} \|\Lambda^{k} u_{r}^{\gamma}\|^2_{L^2}+\gamma \|\Lambda^{k+1}u_{r}^{\gamma}\|^2_{L^2},
\end{align*}
where we have used Lemma \ref{kato} in the second inequality. Using this estimate in \eqref{fract}, using Lemma \ref{operator} and observing that
\begin{align*}
&\gamma \|\Lambda^{k+1}u_{r}^{\gamma}\|^2_{L^2}+ \left\langle\gamma \Lambda^k \Delta^{k'} u_{r}^{\gamma}, \Lambda^k u_{r}^{\gamma}\right\rangle_{L^{2}}=\gamma \left(\|\Lambda^{k+1}u_{r}^{\gamma}\|^2_{L^2}- \| \Lambda^{k+2[k]+1} u^\gamma_r\|^2_{L^2} \right) \le 0,\\
& \mu \left\langle \Lambda^k \Delta u^\gamma_r, \Lambda^k u^\gamma_r \right \rangle_{L^2} = -\mu \|\Lambda^k \nabla u^\gamma_r\|^2_{L^2}\le 0,
\end{align*} 
we get 
\begin{align*}
    \|\Lambda^k u^\gamma_r(t)\|^2_{L^2} &\lesssim
    \| \Lambda^k u^\gamma_r(0) \|^2_{L^2}+ \frac{1}{4\gamma} \int^t_0 \theta_r(\| \nabla u^\gamma_r(s)\|_{L^\infty}) \| \nabla u^\gamma_r(s)\|^2_{L^\infty}\| \Lambda^k u^\gamma_r(s)\|^2_{L^2}\mathrm{d}s\\
&+\sum^n_{i=1}\int_{0}^{t}\left\langle\Lambda^{k} \mathscr{L}_i u_{r}^{\gamma}(s), \Lambda^k u_{r}^{\gamma}(s)\right\rangle_{L^{2}} \mathrm{~d} W_i(s)+ \frac12 \| \Lambda^k V\|^2_{L^2}T+ \int^t_0 \| \Lambda^k u^\gamma_r(s)\|^2_{L^2}\mathrm{d}s.
\end{align*}
Therefore, we have
\begin{align}\label{semi-norm}
\begin{aligned}
    \|\Lambda^k u^\gamma_r(t)\|^2_{L^2}&\lesssim \| \Lambda^k u^\gamma_r(0)\|^2+\frac12 \| \Lambda^k V\|^2_{L^2}T +C(r,\gamma) \int^t_0 \| \Lambda^k u^\gamma_r(s)\|^2_{L^2}\mathrm{d}s\\ &\quad+\sum^n_{i=1}\int_{0}^{t}\left\langle\Lambda^{k} \mathscr{L}_i u_{r}^{\gamma}(s), \Lambda^k u_{r}^{\gamma}(s)\right\rangle_{L^{2}} \mathrm{~d} W_i(s).
    \end{aligned}
\end{align}
Combining \eqref{Ltwo} and \eqref{semi-norm} we get,
\begin{align}\label{Hk-estimate}
\begin{aligned}
 \| u^\gamma_r\|^2_{H^k} &\le \|u^\gamma_r(0)\|^2_{H^k}+C(V,T,r)+ C(r,\gamma)  \int^t_0 \|u^\gamma_r(s)\|^2_{H^k}\mathrm{d}s\\ &+\sum^n_{i=1}\int_{0}^{t}\left\langle \mathscr{L}_i u_{r}^{\gamma}(s),  u_{r}^{\gamma}(s)\right\rangle_{L^{2}} \mathrm{~d} W_i(s)+\sum^n_{i=1}\int_{0}^{t}\left\langle\Lambda^{k} \mathscr{L}_i u_{r}^{\gamma}(s), \Lambda^k u_{r}^{\gamma}(s)\right\rangle_{L^{2}} \mathrm{~d} W_i(s)
\end{aligned}
\end{align}
Denoting by $M_t:=\sum^n_{i=1}\int_{0}^{t}\left\langle \mathscr{L}_i u_{r}^{\gamma}(s),  u_{r}^{\gamma}(s)\right\rangle_{L^{2}} \mathrm{~d} W_i(s)+\sum^n_{i=1}\int_{0}^{t}\left\langle\Lambda^{k} \mathscr{L}_i u_{r}^{\gamma}, \Lambda^k u_{r}^{\gamma}\right\rangle_{L^{2}} \mathrm{~d} W_i(s)$, we  apply Gr\"onwall's inequality and square both sides of \eqref{Hk-estimate} to obtain
\begin{align}\label{squaring-for-BDG}
\|u^\gamma_r(t)\|^4_{H^k}\lesssim e^{Ct}(C^2(V,T,r)+\|u^\gamma_r(0)\|^4_{H^k}+|M_t|^2)
\end{align}
Taking supremum over time and applying expectation gives
\begin{align*}
      E \left[\sup_{s \in [0,T]}\|u^\gamma_r(s)\|^4_{H^k} \right]\le e^{CT}\left(\|u^\gamma_r(0)\|^4_{H^k}+C^2(V,T,r)+E\left[\sup_{s \in [0,T]}|M_s|^2\right] \right).
 \end{align*}
 We now proceed exactly as after Eqn (4.14) in \cite[Prop 4.5]{alonso} to conclude 
 \begin{align*}
     E\left[\sup _{t \in[0, T]}\|u^\gamma_r(t)\|_{H^k(\mathbb{T}^n)}^4\right] \leq C(T) .
 \end{align*}
For the second part of the proposition we follow the techniques in \cite{crisan}. Using Eq.  \eqref{regular}, we consider the difference, for $0\le s<t\le T$,
\begin{align*}
    u^\gamma_r(t)-u^\gamma_r(s)&= \sum^n_{i=1}\int^t_s \mathscr{L}_i u^\gamma_r(\tau) \mathrm{d}W_i(\tau)+ \gamma \int^t_s \Delta^{k'} u^\gamma_r(\tau)\mathrm{d}\tau\\
    &+\int^t_s\left(V(x)+\mu\Delta u^\gamma_r(\tau)+\frac12\sum^n_{i=1}\mathscr{L}^2_i u^\gamma_r(\tau)-\frac12\theta_r(\|\nabla u^\gamma_r(\tau)\|_\infty)\vert\nabla u^\gamma_r(\tau)\vert^2\right)\mathrm{d}\tau.
\end{align*}
Taking $\|\cdot\|^2_{H^{-M}(\mathbb{T}^n)}$ ($M$ to be chosen later) in both sides, applying triangle and Jensen's inequalities we get, after taking expectation,  
\begin{align}\label{fourth-estimate}
\begin{aligned}
    E \left[ \|u^\gamma_r(t)-u^\gamma_r(s) \|^4_{H^{-M}}\right] &\lesssim E\left[\left\|\sum^n_{i=1}\int^t_s\mathscr{L}_i u^\gamma_r(\tau) \mathrm{d}W_i(\tau)\right\|^4_{H^{-M}}\right]\\
    &+(t-s)^3 \int^t_s E \left[\|\gamma  \Delta^{k'} u^\gamma_r(\tau)\|^4_{H^{-M}} \right]\mathrm{d}\tau +(t-s)\|V\|^4_{H^{-M}}\\
&+(t-s)^3 \int^t_s E \left[\mu\left\|\Delta u^\gamma_r(\tau)\right\|^4_{H^{-M}}\right]\mathrm{d}\tau \\
&+(t-s)^3\int^t_s E \left[\left\|\frac12\sum^n_{i=1}\mathscr{L}^2_i u^\gamma_r(\tau)\right\|^4_{H^{-M}}\right]\mathrm{d}\tau\\
&+(t-s)^3 \int^t_s E\left[ \theta_r(\| \nabla u^\gamma_r(\tau)\|_\infty) \|| \nabla u^\gamma_r(\tau)|^2\|^4_{H^{-M}} \right] \mathrm{d}\tau.
\end{aligned}
\end{align}
Notice that $\|| \nabla u^\gamma_r|^2\|^4_{H^{-M}} \le \|| \nabla u^\gamma_r|^2\|^4_{L^2} \le \| \nabla u^\gamma_r\|^4_{L^\infty}\| u^\gamma_r \|^4_{H^k}$ and therefore the last term of \eqref{fourth-estimate} is estimated as,
\begin{align*}
\theta_r(\| \nabla u^\gamma_r(\tau)\|_\infty) \|| \nabla u^\gamma_r(\tau)|^2\|^4_{H^{-M}} \le C(r) \| u^\gamma_r\|^4_{H^k},
\end{align*}
which gives $\int^t_s E\left[ \theta_r(\| \nabla u^\gamma_r(\tau)\|_\infty) \|| \nabla u^\gamma_r(\tau)|^2\|^2_{H^{-M}} \right] \mathrm{d}\tau \le C(t-s)$, where we use \eqref{fourth}.\\
For the second term in RHS of  \eqref{fourth-estimate} we can use the fact $\| \Delta^{k'} u^\gamma_r\|_{H^{-M}}\le \|u^\gamma_r\|_{H^k}$ for $M=3[k']+2$ to conclude that
$$\int^t_sE \left[ \| \gamma \Delta^{k'} u^\gamma_r(\tau)\|^4_{H^{-M}}\right]\mathrm{d}\tau \lesssim \int^t_s E [ \| u^\gamma_r(\tau)\|^4_{H^k}]\mathrm{d}\tau\le C(T).$$
With similar reasoning the fourth term of \eqref{fourth-estimate} is also bounded above by $t-s$. For the It\^{o} correction term, we have 
\begin{align*}
    \int^t_s E \left[\left\|\frac12\sum^n_{i=1}\mathscr{L}^2_i u^\gamma_r(\tau)\right\|^4_{H^{-M}}\right]\mathrm{d}\tau &\le  \int^t_s E \left[\left\|\frac12\sum^n_{i=1}\mathscr{L}^2_i u^\gamma_r(\tau)\right\|^4_{L^2}\right]\mathrm{d}\tau\\
    &\le C(a,b) \int^t_s E\left[\|\Delta u^\gamma_r(\tau)\|^4_{L^2}+ \|\nabla u^\gamma_r(\tau)\|^4_{L^2}+\| u^\gamma_r(\tau)\|^4_{L^2} \right]\mathrm{d}\tau\\
    &\lesssim \int^t_s E \left[ \|u^\gamma_r(\tau)\|^4_{H^k}\right]\mathrm{d}\tau \le C,
\end{align*}
where $C(a,b)$ consists of $W^{2,\infty}$ norm of $a$ and $W^{1,\infty}$ norm of $b$.
The stochastic term is estimated as follows 
\begin{align*}
&E\left[\left\|\sum^n_{i=1}\int^t_s \mathscr{L}_i u^\gamma_r(\tau) \mathrm{d}W_i(\tau)\right\|^4_{H^{-M}}\right] \le E\left[\left\|\sum^n_{i=1}\int^t_s\mathscr{L}_i u^\gamma_r(\tau) \mathrm{d}W_i(\tau)\right\|^4_{L^2}\right]\\
&\qquad\le C E \left[ \left( \int^t_s \sum^n_{i=1}\|\mathscr{L}_i u^\gamma_r(\tau)\|^2_{L^2}\mathrm{d}\tau\right)^2\right]\le C(a,b)(t-s) \int^t_s E\left[\|\nabla u^\gamma_r(\tau)\|^4_{L^2}+\| u^\gamma_r(\tau)\|^4_{L^2} \right]\mathrm{d}\tau \\
&\qquad\lesssim (t-s)\int^t_s E \left[ \|u^\gamma_r(\tau)\|^4_{H^k}\right]\mathrm{d}\tau \le C(t-s)^2,
\end{align*}
where the constant $C(a,b)$ consists of $W^{1,\infty}$ norm of $a$ and $L^\infty$ norm of $b$. Putting back all the estimates in \eqref{fourth-estimate} we get
$$ E \left[ \|u^\gamma_r(t)-u^\gamma_r(s) \|^4_{H^{-M}}\right] \le C(t-s)^2, $$
and therefore, for $0<\alpha<1/2$,
\begin{align*}
    E \left[ \int^T_0 \int^T_0 \frac{\| u^\gamma_r(t)- u^\gamma_r(s)\|^4_{H^{-M}}}{|t-s|^{1+4\alpha}}\mathrm{d}t\mathrm{d}s\right] \le E \left[ \int^T_0 \int^T_0 \frac{C(T)}{|t-s|^{4\alpha-1}}\mathrm{d}t\mathrm{d}s\right]\le C(T). 
\end{align*}
\end{proof}
Having established the energy estimates for the sequence of solutions $\{u^\gamma_r\}$ to \eqref{regular}, we now state the tightness result for $\{u^\gamma_r\}$, which relies on the variant of classical Aubin-Lions Lemma (see \cite{simon}).
\begin{corollary}
    The family of laws $\{ \rho^\gamma_r\}_{\gamma >0}$ of the sequence $\{u^\gamma_r\}_{\gamma >0}$ is tight in the Polish space $E:=C([0,T];H^\beta(\mathbb{T}^n))$ where $2\le \beta <k$.
\end{corollary}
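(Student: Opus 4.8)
The plan is to apply Prokhorov's theorem, so that it suffices to produce, for each $\varepsilon>0$, a set $K_\varepsilon$ that is compact in $E=C([0,T];H^\beta(\mathbb{T}^n))$ and satisfies $\rho^\gamma_r(K_\varepsilon)\ge 1-\varepsilon$ uniformly in $\gamma$. The compact sets will be built from the two \textit{a priori} bounds of Proposition \ref{apriori}, and their compactness in $E$ will be a consequence of the fractional-in-time Aubin--Lions--Simon criterion of \cite{simon}: since $\mathbb{T}^n$ is compact, the embedding $H^k\hookrightarrow H^\beta$ is compact for $\beta<k$, and one has the chain $H^k\hookrightarrow\hookrightarrow H^\beta\hookrightarrow H^{-M}$. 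The relevant statement asserts that a set bounded in $L^\infty([0,T];H^k)\cap \mathscr{W}^{\alpha,4}([0,T];H^{-M})$ is relatively compact in $C([0,T];H^\beta)$, provided $4\alpha>1$ so that $\mathscr{W}^{\alpha,4}([0,T];H^{-M})\hookrightarrow C([0,T];H^{-M})$; this forces the choice $1/4<\alpha<1/2$, which is compatible with the range in which the second estimate of Proposition \ref{apriori} was proven.

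Concretely, for $R>0$ I would set
\begin{align*}
K_R:=\Big\{u:\ \|u\|_{L^\infty([0,T];H^k)}\le R,\ \ \|u\|_{\mathscr{W}^{\alpha,4}([0,T];H^{-M})}\le R\Big\},
\end{align*}
which by the above criterion has compact closure $\overline{K_R}$ in $E$. It then remains to bound the complementary probabilities uniformly in $\gamma$. By Chebyshev's inequality together with the first bound in \eqref{fourth},
\begin{align*}
\mathbb{P}\Big(\|u^\gamma_r\|_{L^\infty([0,T];H^k)}>R\Big)\le \frac{1}{R^4}\,E\Big[\sup_{t\in[0,T]}\|u^\gamma_r(t)\|^4_{H^k}\Big]\le \frac{C_1}{R^4},
\end{align*}
and likewise, using that the $L^4([0,T];H^{-M})$ part of the $\mathscr{W}^{\alpha,4}$ norm is controlled by the sup bound (via $H^k\hookrightarrow H^{-M}$) and the seminorm part by the second bound in \eqref{fourth},
\begin{align*}
\mathbb{P}\Big(\|u^\gamma_r\|_{\mathscr{W}^{\alpha,4}([0,T];H^{-M})}>R\Big)\le \frac{C}{R^4}\big(C_1+C_2\big).
\end{align*}
Choosing $R=R(\varepsilon)$ large enough that each of these is at most $\varepsilon/2$, I obtain $\rho^\gamma_r(\overline{K_R})=\mathbb{P}(u^\gamma_r\in\overline{K_R})\ge 1-\varepsilon$ for every $\gamma$, and since $\overline{K_R}$ is compact in $E$ this is exactly tightness; Prokhorov's theorem then yields the relative compactness of the laws.

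The main obstacle is the correct application of the compactness criterion of \cite{simon}: one must check that the pair of bounds in Proposition \ref{apriori} feeds into a version of the Aubin--Lions--Simon lemma that yields compactness in the strong topology of $C([0,T];H^\beta)$ rather than merely in some $L^p([0,T];H^\beta)$ space. The key point is the requirement $4\alpha>1$, which converts the fractional time-regularity in $H^{-M}$ into genuine continuity of the paths with values in $H^{-M}$; combined with the uniform $H^k$ bound, the compact embedding $H^k\hookrightarrow\hookrightarrow H^\beta$ (equivalently, Ehrling's inequality $\|v\|_{H^\beta}\le\varepsilon\|v\|_{H^k}+C_\varepsilon\|v\|_{H^{-M}}$) upgrades equicontinuity in $H^{-M}$ to equicontinuity in $H^\beta$. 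The remaining bookkeeping is only to confirm that $\beta$ may range over all of $[2,k)$, which holds since $\beta<k$ makes the embedding compact for the fixed $M=3[k']+2$ used in Proposition \ref{apriori} (the interpolation exponent $\theta=(\beta+M)/(k+M)$ lying in $(0,1)$ precisely when $\beta<k$).
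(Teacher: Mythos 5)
Your argument is correct and follows essentially the same route as the paper: both rely on the Simon/Aubin--Lions compactness of bounded sets of $L^\infty([0,T];H^k(\mathbb{T}^n))\cap\mathscr{W}^{\alpha,4}([0,T];H^{-M}(\mathbb{T}^n))$ in $C([0,T];H^\beta(\mathbb{T}^n))$ together with Chebyshev's inequality applied to the two estimates of Proposition \ref{apriori} (the paper merely splits your single set $K_R$ into three pieces $K_1\cap K_2\cap K_3$). Your explicit remark that one needs $4\alpha>1$, hence $1/4<\alpha<1/2$, is a correct and slightly more careful reading of the hypothesis of Simon's Corollary 9 than the paper makes explicit.
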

\begin{proof}
    Notice that, for $\beta <k$, we have the compact embedding of $H^k(\mathbb{T}^n)$ in $H^\beta(\mathbb{T}^n)$ and thus, applying the Corollary 9 from \cite{simon}, we get the compact embedding of $E_0:=L^\infty([0,T]; H^k(\mathbb{T}^n)) \cap \mathscr{W}^{\alpha,4}([0,T];H^{-M}(\mathbb{T}^n))$ into $C([0,T]; H^\beta(\mathbb{T}^n))$. From Proposition \ref{apriori}, we know that the family of laws $\{\rho^\gamma_r\}_{\gamma>0}$ are supported on $E_0$, therefore it suffices to prove the tightness of $\{\rho^\gamma_r\}_{\gamma>0}$ in $E_0$. 
    Define the following sets
    \begin{align*}
       & K_1:= \{ f:[0,T] \times \mathbb{T}^n: \sup_{t \in[0, T]}\|f(t)\|_{H^k}^2 \leq R_1\},\\
       & K_2:=\{f:[0,T] \times \mathbb{T}^n: \int^T_0\| f(t)\|^4_{H^{-M}}\mathrm{d}t \le R_2\},\\
       &K_3:=\left\{f:[0,T] \times \mathbb{T}^n: \int_0^T \int_0^T \frac{\|f(t)-f(s)\|_{H^{-M}}^4}{|t-s|^{1+4 \alpha}} \mathrm{d} t \mathrm{~d} s \leq R_3\right\},
    \end{align*}
    for some positive reals $R_1,R_2,R_3$. Let $\varepsilon>0$ and define $K:= K_1 \cap K_2 \cap K_3$, then we shall show that $K\in E_0$ such that
       $\rho^\gamma_r(K^c) \le \varepsilon$. By Chebyshev's inequality we have 
\begin{align*}
\rho^\gamma_r(K_1^c)=\mathbb{P}\left( u^\gamma_r(t) \in K_1^c \right)=\mathbb{P}\left( \sup_{[0,T]}\|u^\gamma_r(t)\|^2_{H^k}> R_1\right) \le \frac{1}{R_1} E \left[\sup_{[0,T]}\|u^\gamma_r(t)\|^2_{H^k} \right] \le \frac{C_1}{R_1} \le \frac{\varepsilon}{3} , 
\end{align*}
where we use Proposition \ref{apriori} and choose $R_1$ large enough.
Similarly,
\begin{align*}
\rho^\gamma_r(K_2^c)=    \mathbb{P}\left( \int^T_0\|u^\gamma_r(t)\|^4_{H^{-M}}> R_2\right)&\le \mathbb{P}\left( T \sup_{[0,T]}\|u^\gamma_r(t)\|^4_{H^{-M}}> R_2\right)\\
&\le \mathbb{P}\left( T \sup_{[0,T]}\|u^\gamma_r(t)\|^4_{H^k}> R_2\right)\\
    &\le \frac{T}{R_2} E \left[\sup_{[0,T]}\|u^\gamma_r(t)\|^4_{H^k} \right] \le \frac{TC_2}{R_2} \le \frac{\varepsilon}{3},
\end{align*}
for $R_2$ large enough and finally
\begin{align*}
\rho^\gamma_r(K_3^c)=\mathbb{P}\left( \int^T_0 \int^T_0 \frac{\|u^\gamma_r(t)-u^\gamma_r(s)\|^4_{H^{-M}}}{|t-s|^{1+4 \alpha}} \mathrm{d}t \mathrm{~d}s > R_3\right) \le \frac{\varepsilon}{3},
\end{align*}
for $R_3$ large enough. This implies $\rho^\gamma_r(K^c)\le \sum^3_{i=1}\rho^\gamma_r(K_i^c)\le \varepsilon$ and we get the desired result.
    \end{proof}

Having proved the tightness of $\{\rho^\gamma_r\}$ in $E$, one can apply Prokhorov's compactness theorem to extract a weakly convergent subsequence $\{\rho^{\gamma}_r\}$ (still denoted the same), converging to $\rho_r$ as $\gamma \to 0$.  Skorokhod's representation theorem then supplies new random variables $\{\widetilde{u}^{\gamma}_r\}_{\gamma>0}$ and $\widetilde{u}_r $ on a new probability space $(\widetilde{\Omega},\widetilde{\mathcal{F}},\widetilde{\mathbb{P}})$ with $\{\widetilde{u}^{\gamma}_r\}_{\gamma>0}$ converging to $\widetilde{u}_r$ as $\gamma$ tends to zero $\widetilde{\mathbb{P}}$ a.s., such that the laws of $\{\widetilde{u}^{\gamma}_r\}_{\gamma>0}$ and $\widetilde{u}_r$ are given by $\{\rho^\gamma_r\}_{\gamma>0}$ and $\{\rho_r\}$ respectively. This new sequence $\{\widetilde{u}^{\gamma}_r\}_{\gamma>0}$ satisfies \eqref{regular} weakly and with the convergence result in $\widetilde{\mathbb{P}}$ we get existence of solution for \eqref{truncated} in the new probabilty space $\widetilde{\mathbb{P}}$. Finally, to obtain the convergence in the original probability space, we use the Gy\"ongy-Krylov's lemma, which needs a diagonal assumption (see Lemma \ref{gyongy}) to be applied. This latter assumption is satisfied with the  global uniqueness proved in Subsection \ref{subsec:uniqueness}. Our points in this paragraph are based on standard and classical stochastic arguments and, as our estimates are similar to Crisan \textit{et al.} \cite{crisan}, we refer the reader to this article for a more comprehensive explanation.

We now give details on the  convergence of the solution of Eq. \eqref{regular} to the solution of Eq. \eqref{HJB} $\widetilde{\mathbb{P}}$ a.s. . For convenience we choose to drop the ``tilde" notation and work with $u^\gamma_r$ instead. We begin by integrating \eqref{regular} against test functions and show that
\begin{align*}
     \langle u^\gamma_r(t), \varphi \rangle_{L^2}&= \langle u^\gamma_r(0), \varphi \rangle_{L^2} +\sum^n_{i=1} \int^t_0 \left\langle u^\gamma_r(s,x), \mathscr{L}^*_i \varphi \right\rangle_{L^2}\mathrm{d}W_i(s) +\mu\int^t_0 \langle u^\gamma_r(s), \Delta \varphi \rangle_{L^2} \mathrm{d}s\\
     &+\int^t_0\langle V(x), \varphi \rangle_{L^2}\,ds+\sum^n_{i=1} \int^t_0 \left\langle u^\gamma_r(s,x), (\mathscr{L}^*_i)^2 \varphi \right\rangle_{L^2}\mathrm{d}s\\
     &-\frac12\int^t_0 \langle \theta_r(\|\nabla u^\gamma_r(s)\|_\infty)\vert\nabla u^\gamma_r(s,x)\vert^2 , \varphi(x)\rangle_{L^2}\mathrm{d}s + \int^t_0 \gamma \langle u^\gamma_r(s), \Delta^{k'} \varphi \rangle_{L^2}\mathrm{d}s
     \end{align*}
converges to 
     \begin{align*}
     \langle u_r(t), \varphi \rangle_{L^2}&= \langle u_r(0), \varphi \rangle_{L^2} +\sum^n_{i=1} \int^t_0 \left\langle u_r(s,x), \mathscr{L}^*_i \varphi \right\rangle_{L^2}\mathrm{d}W_i(s)
     +\mu \int^t_0 \langle u_r(s), \Delta \varphi \rangle_{L^2}\mathrm{d}s\\
     &+\int^t_0\langle V(x), \varphi \rangle_{L^2}\mathrm{d}s+\frac12\sum^n_{i=1} \int^t_0 \left\langle u_r(s,x), (\mathscr{L}^*_i)^2 \varphi(x) \right\rangle_{L^2}\mathrm{d}s\\
     &-\frac12\int^t_0 \langle \theta_r(\|\nabla u_r(s)\|_\infty)\vert\nabla u_r(s,x)\vert^2 , \varphi\rangle_{L^2}\mathrm{d}s,
\end{align*}
as $\gamma$ tends to zero for every $\varphi \in C^\infty(\mathbb{T}^n)$ and $t \in [0,T]$. Let us analyze each term separately as follows
\begin{itemize}
    \item[-] For the term on the LHS, notice that $| \langle u^\gamma_r(t)- u_r(t), \varphi \rangle \le \|u^\gamma_r(t)-u_r(t)\|_{L^2}\|\varphi\|_{L^2} \to 0$ as $\gamma$ tends to zero and $u^\gamma_r \to u_r$ in $C([0,T];H^\beta(\mathbb{T}^n))- \widetilde{\mathbb{P}}$ a.s. .
    \item[-] For the martingale term one can use the fact that $u^\gamma_r \to u_r$ in $C([0,T];H^\beta(\mathbb{T}^n))- \widetilde{\mathbb{P}}$ a.s. as $\gamma$ tends to zero and apply stochastic dominated convergence to the process $\langle u^\gamma_r(t)-u_r(t), \mathscr{L}^*_i\varphi \rangle_{L^2}$. 
    \item[-] For the viscosity term $\langle u^\gamma_r, \Delta \varphi \rangle_{L^2}$, we use 
    \begin{align*}
        \left|\int^t_0 \langle u^\gamma_r(s)- u_r(s), \Delta \varphi \rangle_{L^2} \mathrm{d}s \right|\le T \sup_{[0,T]}\|u^\gamma_r(s)-u_r(s)\|_{L^2}\| \Delta \varphi\|_{L^2} \to 0
    \end{align*}
    as $\gamma$ tends to zero, again using the pathwise convergence $\{u^\gamma_r\}$ to $u_r$. In a similar way the term $\left\langle u^\gamma_r(s,x), (\mathscr{L}^*_i)^2 \varphi(x) \right\rangle_{L^2}$ can be treated.
    \item[-] Let us now consider the higher order term;  noticing that \\ $\int^t_0\| u^\gamma_r(s)-u_r(s)\|_{L^2}\,ds \le T \sup_{[0,T]}\|u^\gamma_r(s)-u_r(s)\|_{L^2} \to 0$, as $\gamma$ tends to zero, we get $\int^t_0 \|u^\gamma_r(s)\|_{L^2}\mathrm{d}s \to \int^t_0\|u_r(s)\|_{L^2}\mathrm{d}s$ as $\gamma$ tends to zero. Therefore, $\left|\gamma \int^t_0 \langle  u^\gamma_r, \Delta^{k'} \varphi \rangle_{L^2}\,ds\right| \le \gamma \|\Delta^{k'} \varphi\|_{L^2} \int^t_0 \|u^\gamma_r\|_{L^2}\mathrm{d}s \to 0$.
    \item[-] Finally, we look at passage to the limit in the nonlinear term  $\int^t_0\langle \theta_r(\|\nabla u^\gamma_r\|_\infty)\vert\nabla u^\gamma_r\vert^2 , \varphi\rangle_{L^2}\mathrm{d}s$. First we show that $$\langle |\nabla u^\gamma_r|^2, \varphi \rangle_{L^2} \to \langle |\nabla u_r|^2, \varphi \rangle_{L^2}$$
in $L^\infty([0,T])$,     $\widetilde{\mathbb{P}}-a.s.$  as $\gamma$ tends to  zero. We have, 
\begin{align*}
    &\left| \langle|\nabla u^\gamma_r|^2-|\nabla u_r|^2, \varphi \rangle_{L^2}\right| \le \| \varphi\|_{L^\infty}\||\nabla u^\gamma_r|-|\nabla u_r|\|_{L^2(\mathbb{T}^n)}\||\nabla u^\gamma_r|+|\nabla u_r|\|_{L^2(\mathbb{T}^n)} \\
     &\qquad\le \|\varphi\|_{L^\infty} \left( \| | \nabla u^\gamma_r|\|_{L^2} + \|| \nabla u^\gamma_r|\|_{L^2}\right)\||\nabla u^\gamma_r|-|\nabla u_r|\|_{L^2(\mathbb{T}^n)} \to 0 
\end{align*}
as $\gamma$ tends to zero using convergence for $\{u^\gamma_r\}$ to $\{u_r\}$ in $C([0,T]; H^1(\mathbb{T}^k))$. Also, using the Lipschitz continuity of $\theta_r$,  $\theta_r(\|\nabla u^\gamma_r\|_\infty) \to \theta_r(\|\nabla u_r\|_\infty) $ as $\gamma$ tends to zero. Therefore we conclude that
\begin{align*}
 \int^t_0 \theta_r(\|\nabla u^\gamma_r\|_\infty) \langle |\nabla u^\gamma_r|^2, \varphi \rangle_{L^2}\mathrm{d}s \to \int^t_0 \theta_r(\|\nabla u_r\|_\infty) \langle |\nabla u_r|^2, \varphi \rangle_{L^2}\mathrm{d}s
\end{align*}
as $\gamma$ tends to zero.
\end{itemize}
This finishes the proof of existence of global weak solutions to \eqref{truncated}, but, in fact Eq. \eqref{truncated} is satisfied in the strong sense by $u_r$ as the paths belong to $C([0,T];H^\beta(\mathbb{T}^n))$ for $\beta \ge 2$, and one can use the embedding $H^\beta(\mathbb{T}^n)\hookrightarrow C^2(\mathbb{T}^n) $ for $\beta$ sufficiently large. We are left to prove the continuity of $u_r$ in $H^k(\mathbb{T}^n)$. From Proposition \ref{apriori}, we know that $u^\gamma_r\in L^2(\Omega;L^\infty([0,T]; H^k(\mathbb{T}^n)))$ and therefore we have $u_r \in L^2(\Omega;L^\infty([0,T];H^k(\mathbb{T}^n)))$ by Fatou's lemma. This implies that $u_r \in C([0,T];H^\beta(\mathbb{T}^n)) \cap L^\infty([0,T];H^k(\mathbb{T}^n))$ almost surely and therefore $u \in C_w([0,T];H^k(\mathbb{T}^n))$ (see  \cite[Page 263, Lemma 1.4]{temam}) almost surely. To conclude, we need to prove that the map $t \mapsto \|u_r(t)\|_{H^k}$ is continuous almost surely on $[0,T]$ and for this we follow \cite[Proposition 3.2]{alonso-non-local}. As $u_r\in L^2( \Omega; L^\infty([0,T];H^k(\mathbb{T}^n)))$, the stopping time $\tau_N=\inf\{t \ge 0 : \|u(t)\|_{H^k}>N\} \to \infty$ as $N \to \infty$ a.s.
. Thus, we only require to prove the continuity till time $\tau_N \wedge T$ for each $N \ge 1$. Similar to Eq. \eqref{Hk-estimate}, we have the following estimate 
\begin{align*}
\begin{aligned}
 \| u_r(t)\|^2_{H^k} &\lesssim \|u_r(s)\|^2_{H^k}+C(V,r)|t-s|+ C(r)  \int^t_s \|u_r(\tau)\|^2_{H^k}\mathrm{d}\tau\\ &+\sum^n_{i=1}\int_s^t\left\langle \mathscr{L}_i u_r(\tau),  u_r(\tau)\right\rangle_{L^{2}} \mathrm{d} W_i(\tau)+\sum^n_{i=1}\int_s^t\left\langle\Lambda^{k} \mathscr{L}_i u_r(\tau), \Lambda^k u_r(\tau)\right\rangle_{L^{2}} \mathrm{d} W_i(\tau)
\end{aligned}
\end{align*}
Using the fact that $\|u(t)\|_{H^k}<N$ on $[0, \tau_N \wedge T]$ and proceeding similarly as for Eqn \eqref{squaring-for-BDG}, we get
\begin{align*}
  E\left[ \left(\|u_r(t \wedge \tau_N)\|^2_{H^k}-\|u_r( s \wedge \tau_N)\|^2_{H^k} \right)^2\right] \le C(N,V,r,a,b)|t-s|,  
\end{align*}
almost surely, then using Kolmogorov's continuity theorem we obtain the continuity of the map $t \mapsto \| u(t \wedge \tau_N)\|_{H^k}$ as desired.
\section{Proof of the Main Result}\label{sec:main}
With the well-posedness result of \eqref{truncated} at hand, we are now in a position to provide the proof of the main result of this article viz. Theorem \ref{uniqueness}. We start with establishing the uniqueness result.
\subsection{Uniqueness result}
In the following proposition, we show the local uniqueness of solutions to \eqref{HJB}. The result is proved using a contradiction argument and shows that if we start with two different solutions of \eqref{HJB} defined up to a certain stopping time then they must coincide almost surely. 
\begin{prop}\label{uniquess-local}
Let $\tau$ be a stopping time and $u_1,u_2:\Omega \times [0, \tau) \times \mathbb{T}^n$ be two $H^k(\mathbb{T}^n)$-valued solutions of \eqref{HJB} with the same initial data $v_0 \in H^k(\mathbb{T}^n)$, then $u_1 =u_2$ on $[0,\tau)$.
\end{prop}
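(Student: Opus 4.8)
The plan is to reduce uniqueness for \eqref{HJB} to the uniqueness already obtained for the truncated equation \eqref{truncated} via a localization argument. Set $w := u_1 - u_2$ and, for each $N \in \mathbb{N}$, introduce the stopping time
\[
\sigma_N := \inf\{ t \ge 0 : \|u_1(t)\|_{H^k} + \|u_2(t)\|_{H^k} \ge N \} \wedge \tau .
\]
Since $u_1,u_2 \in C([0,\tau);H^k(\mathbb{T}^n))$ almost surely, their $H^k$-norms stay finite on every compact subinterval of $[0,\tau)$, so $\sigma_N \nearrow \tau$ as $N \to \infty$ almost surely. It therefore suffices to prove that $w \equiv 0$ on $[0,\sigma_N)$ for each fixed $N$.

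First I would observe that on $[0,\sigma_N)$ both solutions in fact solve the truncated equation. Indeed, the Sobolev embedding gives $\|\nabla u_i\|_\infty \le C\|u_i\|_{H^k} < CN$ there, so fixing the truncation parameter $r \ge CN$ yields $\theta_r(\|\nabla u_i\|_\infty) = 1$ and hence $\theta_r(\|\nabla u_i\|_\infty)|\nabla u_i|^2 = |\nabla u_i|^2$ for $i=1,2$. Consequently, on $[0,\sigma_N)$ the nonlinearity of \eqref{HJB} coincides with that of \eqref{truncated}, and the pair $(u_1,u_2)$ falls exactly within the framework of Subsection \ref{subsec:uniqueness}.

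The estimate then proceeds verbatim as in that subsection: applying It\^o's formula to $\|w\|_{L^2}^2$ and to $\|\Lambda^k w\|_{L^2}^2$, using Lemma \ref{operator} to control the first-order operators $\mathscr{L}_i$ together with their It\^o corrections, and Lemma \ref{kato} to estimate the commutator arising in $\Lambda^k(\nabla u_i \cdot \nabla w)$, one arrives at the differential inequality \eqref{est3}. Rewriting it in Gr\"onwall form with $X_t = -\int_0^t (1 + \|u_1\|_{H^k} + \|u_1\|_{H^k}^2 + \|u_2\|_{H^k} + \|u_2\|_{H^k}^2)\,ds$ and taking expectations yields $\mathbb{E}\big[\exp(X_{t\wedge\sigma_N})\|w(t\wedge\sigma_N)\|_{H^k}^2\big] \le 0$, whence $w \equiv 0$ on $[0,\sigma_N)$ almost surely. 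Letting $N \to \infty$ gives $u_1 = u_2$ on $[0,\tau)$; phrased as the contradiction in the statement, if the two solutions differed on a set of positive measure then the preceding bound would force $\|w\|_{H^k} > 0$ somewhere on some $[0,\sigma_N)$, contradicting $\mathbb{E}[\exp(X_{t\wedge\sigma_N})\|w(t\wedge\sigma_N)\|_{H^k}^2] \le 0$.

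The main obstacle is the same derivative loss that drives the whole paper: the quadratic gradient nonlinearity produces, at the $H^k$ level, a top-order contribution involving $\|\Lambda^{k+1} w\|_{L^2}$ that is \emph{not} controlled by $\|w\|_{H^k}$ alone. This is precisely where the viscosity is indispensable — the dissipative term $\langle \Lambda^k \Delta w, \Lambda^k w\rangle_{L^2} = -\|\Lambda^{k+1} w\|_{L^2}^2$ must absorb the $\varepsilon\|\Lambda^{k+1} w\|_{L^2}^2$ terms generated by Young's inequality, which forces the choice $3\varepsilon < \mu$. Everything else — the transport terms, the It\^o corrections, and the difference of the cut-off factors — is of lower order and is dispatched using the Lipschitz continuity of $\theta_r$ and Lemma \ref{operator}.
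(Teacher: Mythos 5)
Your argument is correct, but it is not the route the paper takes, and the comparison is instructive. The paper's own proof of Proposition \ref{uniquess-local} never goes to the $H^k$ level: it writes the difference of the nonlinearities as $\tfrac12(\nabla u_1+\nabla u_2)\cdot\nabla w$, applies It\^o's formula only to $\|w\|_{L^2}^2$, and uses the integration by parts $\langle \nabla u_i\cdot\nabla w, w\rangle_{L^2}=\tfrac12\langle-\Delta u_i,w^2\rangle_{L^2}\lesssim\|u_i\|_{H^k}\|w\|_{L^2}^2$ (their \eqref{IBP-unique}) so that no derivative falls on $w$ at all; the Laplacian is simply discarded via $\langle\Delta w,w\rangle_{L^2}\le 0$, and Gr\"onwall in $L^2$ already forces $w\equiv 0$. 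Your route instead localizes with $\sigma_N$, observes that on $[0,\sigma_N)$ both solutions solve the truncated equation with $r\ge CN$, and imports the full $H^k$ uniqueness estimate of Subsection \ref{subsec:uniqueness}, including Kato--Ponce and the absorption of $\varepsilon\|\Lambda^{k+1}w\|_{L^2}^2$ by the viscosity. Both arguments are valid, and your explicit stopping times $\sigma_N$ actually handle the martingale/integrability issues in the expectation step more carefully than the paper does. However, your closing claim that the viscosity is \emph{indispensable} for uniqueness is an artifact of the heavier route you chose: the paper's $L^2$ argument shows uniqueness without using $\mu>0$ at all (consistent with Remark \ref{rem:1} if one reads it as uniqueness surviving at $\mu=0$), whereas the dissipation is genuinely needed only in the $H^k$-level estimate you borrow from the truncated problem. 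In short, you prove a stronger statement ($w=0$ in $H^k$ directly) at the cost of stronger machinery; the paper gets the same conclusion from the cheaper $L^2$ contraction.
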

\begin{proof}
Let the following relation be satisfied
\begin{align*}
       \mathrm{d}u_j(s,x)-\sum^n_{i=1}\mathscr{L}_i u_j(s,x)\circ\mathrm{d}W_i(s)=\left(V(x)+\mu\Delta u_j-\frac{1}{2}\vert\nabla u_j(s,x)\vert^2\right)\mathrm{d}s, \quad \text{for } j=1,2.
\end{align*}
Define $u= u_1-u_2$, then $u$ satisfies the following
\begin{align*}
    \mathrm{d}u(s,x)-\sum^n_{i=1} \mathscr{L}_i u(s,x)\circ\mathrm{d}W_i(s)=\left(\mu\Delta u-\frac{1}{2}\left(\vert\nabla u_1(s,x)\vert^2-\vert\nabla u_2(s,x)\vert^2\right)\right)\mathrm{d}s,
\end{align*}
which can also be written as 
\begin{align*}
    \mathrm{d}u(s,x)-\sum^n_{i=1} \mathscr{L}_i u(s,x)\mathrm{d}W_i(s)=\left(\frac12\sum^n_{i=1} \mathscr{L}^2_i u+\mu\Delta u-\frac{1}{2}\left(\nabla u_1\cdot \nabla u+ \nabla u\cdot \nabla u_2\right)\right)\mathrm{d}s.
\end{align*}
We now apply It\^o's formula for $\|\cdot\|^2_{L^2}$   to deduce
\begin{align*}
&\mathrm{d}\|u\|^2_{L^2}+\left(\langle \nabla u_1\cdot \nabla u, u\rangle_{L^2}+\langle \nabla u_2\cdot \nabla u , u \rangle_{L^2}\right)\mathrm{d}s- 2 \sum^n_{i=1}\langle \mathscr{L}_i u, u\rangle_{L^2} \mathrm{d}W_i(s)\\
     &\qquad= \left(2\mu\langle \Delta u, u\rangle+\sum^n_{i=1}\left( \langle \mathscr{L}_i u(s),\mathscr{L}_i u(s)\rangle_{L^2}+ \langle \mathscr{L}^2_i u(s), u(s)\rangle_{L^2} \right)\right)\mathrm{d}s.
\end{align*}
Using \eqref{IBP-unique} and Lemma \ref{operator} we  obtain
\begin{align*}
    \mathrm{d}\|u\|^2_{L^2} - 2 \sum^n_{i=1}\langle \mathscr{L}_i u, u\rangle_{L^2} \mathrm{d}W_i(s)\lesssim \left(1+\|u_1\|_{H^k}+\|u_2\|_{H^k}\right) \|u\|^2_{L^2}\mathrm{d}s.
\end{align*}
Denoting  $X_t= \int^t_0(\left(1+\|u_1\|_{H^k}+\|u_2\|_{H^k}\right)\mathrm{d}s$ and applying Gr\"{o}nwall's inequality we get 
\begin{align*}
exp(X_t)\|u(t)\|^2_{L^2} \lesssim \sum^n_{i=1}\int^t_0 exp(X_s) \langle  \mathscr{L}_i u, u\rangle_{L^2}\mathrm{d}W_i(s).
\end{align*}
Taking expectation allows us to conclude that
$\mathbb{E} \left(exp(X_t)\|u(t)\|^2_{L^2} \right) \le 0$ and this implies $\|u(t)\|^2_{L^2}=0$ a.s. as $X_t$ is finite, thus the uniqueness follows.
\end{proof}
\begin{rem}
    The uniqueness of the maximal solution of \eqref{HJB} follows from the local uniqueness result proved in Proposition \ref{uniquess-local}; the proof is similar to \cite[Theorem 15]{crisan}. 
\end{rem}
\subsection{Existence of global solution}
First notice that the construction of local solutions of \eqref{HJB} is possible from the main existence result for the global solution of \eqref{truncated}, as illustrated in the following lemma
\begin{lemma}
Let $u_r$ be the global solution of \eqref{truncated} with the stopping time $\tau_r$ defined in Section \ref{sec:truncated}. Then $u_r$ is a local solution of $\eqref{HJB}$ in $[0,\tau_r)$.
\end{lemma}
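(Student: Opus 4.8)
\emph{Plan.} The guiding idea is that the cut-off $\theta_r$ is inactive exactly on the stochastic interval $[0,\tau_r)$, so that there the truncated equation \eqref{truncated} coincides with \eqref{HJB}. Consequently the global solution $u_r$ supplied by Theorem \ref{WP-truncated} will be shown to satisfy, with no modification, all the requirements in the definition of a local solution of \eqref{HJB} on $[0,\tau_r)$.

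First I would transfer the structural properties. By Theorem \ref{WP-truncated} the solution $u_r$ has trajectories in $C([0,T];H^k(\mathbb{T}^n))$ with $k>\tfrac{n}{2}+2$ and is adapted to $(\mathcal{F}_t)_{t\ge 0}$; in particular $u_r\in C([0,\tau_r];H^k(\mathbb{T}^n))$ and $u_r(t\wedge\tau_r)$ is adapted. These are exactly the regularity and measurability conditions demanded of a local solution, and they require no further work.

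The heart of the argument is the identification of the drift. For every $t<\tau_r$ the definition $\tau_r=\inf\{t\ge 0:\|u_r(t)\|_{H^k}\ge r/C\}$ gives $\|u_r(t)\|_{H^k}<r/C$, whence the Sobolev inequality $\|\nabla u_r(t)\|_\infty\le C\|u_r(t)\|_{H^k}$ yields $\|\nabla u_r(t)\|_\infty<r$. Since $\theta_r\equiv 1$ on $[0,r]$, this forces $\theta_r(\|\nabla u_r(t)\|_\infty)=1$ for all $t\in[0,\tau_r)$; moreover, by continuity of $u_r$ in $H^k$ one has $\|\nabla u_r(\tau_r)\|_\infty\le r$, so the identity persists at the endpoint. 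Substituting $\theta_r\equiv 1$ into the integral form of \eqref{truncated}, $u_r$ satisfies, for every finite stopping time $\tau'\le\tau_r$,
\[
u_r(\tau')=u_0+\sum_{i=1}^n\int_0^{\tau'}\mathscr{L}_i u_r\circ\mathrm{d}W_i(s)+\int_0^{\tau'}\Bigl(V(x)+\mu\Delta u_r-\tfrac12|\nabla u_r|^2\Bigr)\,\mathrm{d}s,
\]
which is precisely the integral form of \eqref{HJB} required in the definition of a local solution.

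I do not expect a genuine obstacle here: the statement is essentially the tautology that the truncation is switched off below the threshold encoded by $\tau_r$. The only point warranting care is to confirm that replacing $\theta_r(\|\nabla u_r\|_\infty)|\nabla u_r|^2$ by $|\nabla u_r|^2$ leaves the dynamics up to $\tau'$ unchanged; this is immediate, since the two drift integrands agree pathwise on $[0,\tau_r)\supseteq[0,\tau']$ (so their Lebesgue integrals coincide almost surely), while the Stratonovich term $\sum_{i=1}^n\int_0^{\tau'}\mathscr{L}_i u_r\circ\mathrm{d}W_i$ is common to both equations. Using the continuity of $u_r$ in $H^k$ to pass from the open interval $[0,\tau_r)$ to the stopped identity at any finite $\tau'\le\tau_r$ then completes the verification.
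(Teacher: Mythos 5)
Your proposal is correct and follows exactly the paper's argument: on $[0,\tau_r)$ the stopping-time definition together with the Sobolev inequality gives $\|\nabla u_r\|_\infty\le C\|u_r\|_{H^k}\le r$, so $\theta_r(\|\nabla u_r\|_\infty)=1$ and the truncated equation reduces to \eqref{HJB}. The extra verifications you include (adaptedness, regularity, passage to stopped integral identities) are routine and consistent with what the paper leaves implicit.
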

\begin{proof}
From the definition of $\tau_r$, observe that, for $0\le t \le \tau_r$, one has $\| \nabla u_r\|_{L^\infty}\le C\| u_r\|_{H^k} \le r$, and therefore $\theta_r( \|\nabla u_r\|_\infty) =1$.    
\end{proof}
Proposition \ref{WP-truncated} implies the existence of maximal solution for \eqref{HJB}, the proof of which is similar to \cite[Theorem 14]{crisan} and can be stated as follows
\begin{thm}\label{existence-maximal}
    For given $u_0\in H^k(\mathbb{T}^n)$ and some $k>\frac{n}{2}+2$, there exists a maximal solution $(\tau_{\text{max}},u)$ of \eqref{HJB} along with the  property that either $\tau_{\text{max}}=\infty$ or $\limsup_{ t \to \tau_{\text{max}}}\|u(t)\|_{H^k(\mathbb{T}^n)}=\infty$.
\end{thm}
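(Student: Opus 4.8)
The plan is to build the maximal solution by patching together the truncated solutions $\{u_r\}_{r>0}$ furnished by Theorem \ref{WP-truncated}, using the localizing stopping times $\tau_r$ and the pathwise uniqueness of Proposition \ref{uniquess-local}. First I would recall that for each $r>0$ equation \eqref{truncated} admits a unique global solution $u_r \in C([0,\infty); H^k(\mathbb{T}^n))$ almost surely, and that, by the preceding lemma, the pair $(\tau_r, u_r)$ is a local solution of \eqref{HJB}, because $\|u_r(t)\|_{H^k} < r/C$ forces $\|\nabla u_r\|_\infty \le C\|u_r\|_{H^k} < r$ and hence $\theta_r(\|\nabla u_r\|_\infty)=1$ on $[0,\tau_r)$. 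I would also observe that $\mathbb{P}(\tau_r>0)=1$ whenever $C\|u_0\|_{H^k}<r$, since $u_0 \in H^k(\mathbb{T}^n)$ and the paths are continuous.

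Second, I would establish the compatibility of the family. Fixing $r<r'$ and working on $[0,\tau_r\wedge\tau_{r'})$, both $u_r$ and $u_{r'}$ satisfy \eqref{HJB} in its untruncated form there (the respective cut-offs are identically $1$), so Proposition \ref{uniquess-local} forces $u_r=u_{r'}$ on that interval. Once the solutions coincide, continuity together with the comparison of hitting levels $r/C<r'/C$ yields $\tau_r\le\tau_{r'}$: if we had $\tau_{r'}<\tau_r$, then by continuity $\|u_r(\tau_{r'})\|_{H^k}=\|u_{r'}(\tau_{r'})\|_{H^k}=r'/C\ge r/C$, contradicting $\tau_{r'}<\tau_r$. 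Thus $\{\tau_r\}$ is non-decreasing in $r$, and the solutions agree on common intervals of definition. This lets me set $\tau_{\max}:=\lim_{r\to\infty}\tau_r=\sup_r\tau_r$ and define a single random field $u$ on $[0,\tau_{\max})$ by $u=u_r$ on $[0,\tau_r)$; the definition is unambiguous, $u$ is adapted with paths in $C([0,\tau_{\max});H^k(\mathbb{T}^n))$, and choosing any sequence $r_n\to\infty$ makes $\{\tau_{r_n}\}$ an increasing sequence of stopping times with each $(\tau_{r_n},u)$ a local solution, so $(\tau_{\max},u)$ satisfies the defining structure of a maximal solution. Uniqueness was already addressed in the remark following Proposition \ref{uniquess-local}.

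Third, I would prove the blow-up alternative. On the event $\{\tau_{\max}<\infty\}$ suppose, for contradiction, that $\limsup_{t\to\tau_{\max}}\|u(t)\|_{H^k}\le M<\infty$ on a subset of positive probability. Choosing $r$ with $r/C>M$, on $[0,\tau_{\max})$ the $H^k$-norm of $u=u_r$ stays strictly below the threshold $r/C$, so the first hitting time obeys $\tau_r\ge\tau_{\max}$; combined with $\tau_r\le\tau_{\max}$ this gives $\tau_r=\tau_{\max}$. But $u_r$ is a global continuous solution of \eqref{truncated}, so at its finite hitting time $\|u_r(\tau_r)\|_{H^k}=r/C$, and by continuity $\lim_{t\to\tau_{\max}}\|u(t)\|_{H^k}=r/C>M$, contradicting the bound. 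Hence, on $\{\tau_{\max}<\infty\}$ one must have $\limsup_{t\to\tau_{\max}}\|u(t)\|_{H^k(\mathbb{T}^n)}=\infty$ almost surely.

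The main obstacle I anticipate is the bookkeeping in the second step: verifying the joint measurability and monotonicity of the stopping times $\tau_r$, and confirming that the patched object $u$ is genuinely adapted with the claimed path regularity rather than a mere collection of locally defined pieces. Pathwise uniqueness (Proposition \ref{uniquess-local}) is precisely what makes the patching consistent, while the continuity of $t\mapsto\|u_r(t)\|_{H^k}$ established at the end of Section \ref{sec:truncated} is what makes the hitting-time argument in the blow-up alternative rigorous.
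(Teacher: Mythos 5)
Your proposal is correct and follows essentially the same route as the paper, which obtains the maximal solution from the global well-posedness of the truncated equation and defers the details to Theorem 14 of Crisan \emph{et al.} \cite{crisan}: the argument there is precisely the patching of the $u_r$ along the increasing hitting times $\tau_r$ via local uniqueness, followed by the hitting-time contradiction for the blow-up alternative that you spell out.
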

For our final result regarding the global existence of strong solutions we restrict our attention to the following
\begin{align}\label{HJBp}
\begin{cases}
\mathrm{d}u(s,x)=\mathscr{L} u(s,x)\circ \mathrm{d}W(s)+\left(V(x)+\mu\Delta u-\frac{1}{2}\vert\nabla u(s,x)\vert^2\right)\mathrm{d}s, \quad &\text{in } (0,T)\times \mathbb{T}^n,\\
    u(0,x)=u_0(x), & \text{on } \mathbb{T}^n,
    \end{cases}
\end{align}
where $\mathscr{L}:=a(x)\nabla+b$, s.t. $\nabla a(x)$ and $b$ are constant vectors in $\mathbb{R}^n$. Thanks to Theorem  \ref{existence-maximal}, we have existence of maximal solutions for \eqref{HJBp} and characterization of the stopping time. We establish the global existence result by proving that the blow-up in $H^k$ norm is not possible for the solutions of \eqref{HJBp}. This forces $t_\text{max}$ to take value as infinity. For this, we begin with a crucial lemma,
\begin{lemma}\label{maximum}
  Assume $\frac{\partial V}{\partial x_i} \le 0$ for each $i=1,2,\cdots,n$. Let $u$ be a solution to \eqref{HJBp}, then the following maximum principle holds for the derivative, 
    \begin{align*}
    \| \nabla u(t, \cdot) \|_{L^\infty(\mathbb{T}^n)} \le \| \nabla u(0,\cdot)\|_{L^\infty(\mathbb{T}^n)}
    \end{align*}
    for all $t \in [0,T]$.
\end{lemma}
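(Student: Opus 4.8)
The plan is to establish the bound by a stochastic analogue of Bernstein's gradient method: differentiate the equation, reduce the transport SPDE to a pathwise parabolic problem, and apply a classical maximum principle. Since $u\in C([0,T];H^k)$ with $k>\frac n2+2$, the Sobolev embedding $H^k\hookrightarrow C^2(\mathbb{T}^n)$ guarantees enough regularity to differentiate \eqref{HJBp} in $x_j$ and set $w_j:=\partial_{x_j}u$. Because $\nabla a$ is constant and $a$ is periodic, $a$ is in fact a constant vector (a function on $\mathbb{T}^n$ with constant gradient must be constant), so $\mathscr{L}$ has constant coefficients and commutes with both $\partial_{x_j}$ and $\Delta$. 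Consequently each $w_j$ solves the closed linear transport--diffusion SPDE
\[
\mathrm{d}w_j=\mathscr{L}w_j\circ\mathrm{d}W+\Big(\partial_{x_j}V+\mu\Delta w_j-\nabla u\cdot\nabla w_j\Big)\mathrm{d}s,
\]
whose only inhomogeneous forcing is $\partial_{x_j}V$, and whose first-order drift $-\nabla u\cdot\nabla w_j$ combines with $\mu\Delta w_j$ into a bona fide convection--diffusion operator.

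First I would remove the Stratonovich noise by passing to the Lagrangian frame of the stochastic flow generated by $\mathscr{L}$. Since $a$ is constant, this flow is the explicit translation $\phi_t(x)=x+aW_t$, which commutes with $\Delta$ and $\nabla$; composing $u$ with $\phi_t^{-1}$ and compensating the zeroth-order part of $\mathscr{L}$ by a scalar exponential factor turns \eqref{HJBp} into a genuinely pathwise random PDE of the same viscous Hamilton--Jacobi type, with unchanged principal part $\mu\Delta$ and transported potential $V\circ\phi_t$, whose $x_j$-derivative still satisfies the sign condition. For each fixed $\omega$ this is a deterministic parabolic equation, and the equation for $|\nabla u|^2$ (equivalently the system for the $w_j$) inherits the convection--diffusion structure above. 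I would then apply the parabolic maximum principle pathwise: at a spatial point where $|\nabla u(t,\cdot)|^2$ is maximal one has $\nabla|\nabla u|^2=0$ and $\Delta|\nabla u|^2\le 0$, so the convection and diffusion contributions are nonpositive and the growth of the running maximum is governed only by the forcing built from $\nabla V$; the hypothesis $\partial_{x_j}V\le 0$ is exactly what discards this forcing at the extremum, giving that $t\mapsto\|\nabla u(t,\cdot)\|_{L^\infty}$ is non-increasing.

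As an alternative that stays within the Hilbert-space It\^o calculus used earlier, I would instead convert the equation for $w_j$ to It\^o form and apply It\^o's formula to $\|w_j(s)\|_{L^{2p}}^{2p}$: the diffusion yields the dissipative term $-\mu(2p-1)\int|w_j|^{2p-2}|\nabla w_j|^2\le 0$, the transport terms integrate by parts into lower-order contributions controlled by $\|\Delta u\|_{L^\infty}$ and the constant coefficients, the stochastic integral is a mean-zero martingale, and the forcing contributes $\int|w_j|^{2p-2}w_j\,\partial_{x_j}V$, whose sign is governed by $\partial_{x_j}V\le0$; a Gr\"onwall estimate followed by $p\to\infty$ then recovers the $L^\infty$ bound.

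The main obstacle is concentrated in making the maximum principle rigorous for the stochastic equation, and it is twofold. On the analytic side, the pointwise argument requires either the stochastic-flow reduction---where one must check that the transformed problem stays uniformly parabolic and that the exponential factor handling the zeroth-order part of $\mathscr{L}$ does not destroy monotonicity---or the $L^{2p}$ procedure, where every constant must be tracked uniformly in $p$ so that the limit $p\to\infty$ survives. On the structural side, the forcing coming from $\nabla V$ is sign-indefinite when paired against $\nabla u$, so the entire argument hinges on organizing the computation---componentwise in the $w_j$ or at the spatial extremum of $|\nabla u|^2$---so that $\partial_{x_j}V\le 0$ can be used with the correct sign simultaneously for all components; reconciling this is the delicate point.
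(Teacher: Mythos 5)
Your proposal follows essentially the same route as the paper: differentiate \eqref{HJBp} to obtain a Burgers-type system for $\nabla u$, strip off the zeroth-order noise with an exponential factor, pass to Lagrangian coordinates along the stochastic flow of $a$ so that the Stratonovich transport terms cancel (It\^o--Wentzell), and then apply the deterministic parabolic maximum principle pathwise, invoking $\partial_{x_i}V\le 0$ to control the forcing --- exactly the paper's argument. Your side observation that periodicity together with constant $\nabla a$ forces $a$ itself to be constant on $\mathbb{T}^n$ (so the flow is a pure translation) is a correct simplification the paper does not exploit, but it does not alter the structure of the proof.
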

\begin{proof}
    Notice that if $u$ solves \eqref{HJBp}, then $v=\nabla u$ satisfies the following equation
   \begin{align*}
\begin{cases}
     dv(s,x)&= (a(x)\nabla v(s,x)+( \nabla a(x)+b)v(s,x))\circ dW(s)\\
     &\qquad +\left(\nabla V(x)+\mu\Delta v(s,x)-(v\cdot \nabla v)(s,x)\right)\,ds,\quad \text{in}\, (0,T) \times \mathbb{T}^n\\
    v_i(0,x)&=\frac{\partial}{\partial x_i} u_0(x), \quad \text{on}\,(0,T)\times \mathbb{T}^n.
    \end{cases}
\end{align*}
Let us denote by $c:=\nabla a(x)+b$ and define $w(s,x):=e^{-cW(s)}v(s,x)$, then $w$ satisfies by It\^{o}'s formula (in the Stratonovich sense)
\begin{align*}
    \mathrm{d}w&=-ce^{cW(s)}v(s,x)\circ \mathrm{d}W(s)+e^{-cW(s)}\circ \mathrm{d}v(s,x)\\
    &=a(x)\nabla w(s,x)\circ \mathrm{d}W(s)+\left(e^{-cW(s)}\nabla V(x)+\mu\Delta w(s,x)-(v\cdot \nabla w)(s,x)\right)\,ds.
\end{align*}
Following the techniques in \cite{alonso}, we consider the SDE
\begin{align*}
\mathrm{d}X_t=-a(X_t)\circ \mathrm{d}W(t),
\end{align*}
and denote by $\psi_t(X_0)$ the corresponding flow with initial data $X_0\in \mathbb{T}^n$.
Applying It\^{o}-Wentzell's formula in Stratonovich form for $w$ along $X_t=\psi_t(X_0)$ gives 
\begin{align}\label{change}
\begin{aligned}
w(t,X_t)&=w(0,x)+ \int^t_0 \left(e^{-cW(s)}\nabla V(X_s)+ \mu \Delta w(s,X_s)- (v\cdot \nabla w)(s,X_s) \right)\,ds\\
&+\int^t_0 a(X_s) \nabla w(s,X_s)\circ \mathrm{d}W(s)+\int^t_0 \nabla w(s,X_s) \circ \mathrm{d}X_s,
\end{aligned}
\end{align}
with the last two terms cancelling each other a.s., so that $w$ satisfies a random PDE. 
With the change of variable $z(t,x):= w(t,\psi_t(X_0))$, observe that 
\begin{align*}
    \nabla_x w(t,X_t)&=\nabla_x \psi^{-1}(\psi_t(X_0))\nabla_{X_0} z(t,X_0),\\
    \Delta_x w(t,X_t)&= \Delta_x\psi_t^{-1}\left(\psi_t\left(X_0\right)\right) \nabla_{X_0} z\left(t, X_0\right)+\left|\nabla_x\psi_t^{-1}(\psi_t(X_0))\right|^2\Delta_{X_0} z(t, X_0)
\end{align*}
and thus \eqref{change} is equivalent to the following random PDE
\begin{align}\label{random}
    \frac{\partial z}{\partial t} + \alpha(t,X_t)\cdot \nabla_{X_0} z- e^{-cW(s)}\nabla V(X_t)= \beta(X_t) \Delta z.
\end{align}
where $\alpha= \nabla_x \psi^{-1}_t(\psi_t(X_0))v(t,X_t)-\mu \Delta_x\psi_t^{-1}\left(\psi_t\left(X_0\right)\right)$ and $\beta=\left|\nabla_x\psi_t^{-1}(\psi_t(X_0))\right|^2$.
Following \cite[Lemma 4.14]{alonso}, one concludes that $\|z(t)\|_{L^\infty(\mathbb{T}^n)^n} \le \|z(0)\|_{L^\infty(\mathbb{T}^n)^n}$ for each $t \in [0,T]$ and $1\le i \le n$, where the additional term in \eqref{random} coming from the potential $V$ can be handled after using the assumption $\frac{\partial}{\partial x_i}V \le 0$ for each $i=1,2,\cdots n$. This implies that $\|v_i(t)\|_{L^\infty(\mathbb{T}^n)} \le \|v_i(0)\|_{L^\infty(\mathbb{T}^n)}$ for each $t\in [0,T]$ and $1\le i \le n$, concluding the proof.
\end{proof}
\begin{rem}
The previous lemma plays a key role in establishing the \textit{a priori} estimates for the solution to Eq. \eqref{HJBp}. It says that the maximum principle for the derivative still holds while considering perturbation by transport type noise. This result fits well with the existing maximum principle for the solution to Burger's equation with transport noise (see \cite{alonso}) as taking derivative of HJB equation \eqref{HJB} gives the Burger's equation. 
\end{rem}
\begin{prop}
For the initial data $u_0 \in H^k(\mathbb{T}^n)$, there exists a constant $C(T)$ such that a.s.
\begin{align*}
    E \left[ \sup_{t \in [0,T]}\|u(t)\|_{H^k(\mathbb{T}^n)}\right] \le C(T).
\end{align*}
\end{prop}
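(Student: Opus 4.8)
The plan is to establish a uniform-in-time $H^k$ bound which, combined with the blow-up alternative of Theorem \ref{existence-maximal} (either $\tau_{\text{max}}=\infty$ or $\limsup_{t\to\tau_{\text{max}}}\|u(t)\|_{H^k}=\infty$), rules out finite-time blow-up and thereby forces $\tau_{\text{max}}=\infty$, yielding the global classical solution. The argument deliberately mirrors the \textit{a priori} estimate carried out in Proposition \ref{apriori} for the regularised equation, with one decisive difference: there the coefficient of the nonlinearity stayed bounded through the cut-off $\theta_r$, whereas here there is no cut-off and we must instead exploit the maximum principle of Lemma \ref{maximum}. Under the standing assumption $\partial_{x_i}V\le 0$, that lemma supplies the time-uniform bound $\|\nabla u(t)\|_{L^\infty}\le\|\nabla u(0)\|_{L^\infty}=:K_0$, and it is this fixed constant $K_0$ that will play the role formerly played by the truncation radius $r$.

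First I would apply It\^o's formula to $\|u(t)\|_{L^2}^2$ and to the semi-norm $\|\Lambda^k u(t)\|_{L^2}^2$, exactly as in \eqref{Ltwo} and \eqref{semi-norm}. After converting the Stratonovich integral to It\^o form, the sum of the It\^o correction $\tfrac12\langle\mathscr{L}^2u,\cdot\rangle$ and the quadratic-variation term $\tfrac12\langle\mathscr{L}u,\mathscr{L}u\rangle$ is controlled by $C\|u\|_{H^k}^2$ via Lemma \ref{operator}, while the viscous term produces the good negative contributions $-\mu\|\nabla u\|_{L^2}^2$ and $-\mu\|\Lambda^{k+1}u\|_{L^2}^2$. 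The linear terms $\langle V,u\rangle$ and $\langle\Lambda^kV,\Lambda^ku\rangle$ are bounded by $\|V\|_{H^k}(1+\|u\|_{H^k}^2)$ using assumption \ref{A3}, and all stochastic integrals are collected into a single local martingale $M_t$ to be handled at the end by the Burkholder-Davis-Gundy inequality.

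The heart of the matter is the top-order nonlinear term $\langle\Lambda^k(|\nabla u|^2),\Lambda^ku\rangle_{L^2}$. Using the Kato-Ponce estimate of Lemma \ref{kato} as in Proposition \ref{apriori}, I would bound
\begin{align*}
\left|\langle\Lambda^k(|\nabla u|^2),\Lambda^k u\rangle_{L^2}\right|\lesssim \|\Lambda^{k+1}u\|_{L^2}\,\|\nabla u\|_{L^\infty}\,\|\Lambda^k u\|_{L^2},
\end{align*}
and then, crucially, replace $\|\nabla u\|_{L^\infty}$ by the constant $K_0$ furnished by Lemma \ref{maximum}. A Young inequality of the form $K_0\|\Lambda^{k+1}u\|_{L^2}\|\Lambda^k u\|_{L^2}\le\varepsilon\|\Lambda^{k+1}u\|_{L^2}^2+\tfrac{K_0^2}{4\varepsilon}\|\Lambda^k u\|_{L^2}^2$ with $\varepsilon<\mu$ then lets the supercritical $\|\Lambda^{k+1}u\|_{L^2}^2$ contribution be absorbed into the dissipation $-\mu\|\Lambda^{k+1}u\|_{L^2}^2$, leaving only a term linear in $\|\Lambda^k u\|_{L^2}^2$ with a coefficient depending on the \emph{fixed} constant $K_0$. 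The $L^2$-level nonlinear term is even simpler, since $|\langle|\nabla u|^2,u\rangle_{L^2}|\le\|\nabla u\|_{L^\infty}^2\|u\|_{L^1}\lesssim K_0^2(1+\|u\|_{L^2}^2)$ on the torus.

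Combining the two levels and dropping the negative dissipative remainders yields
\begin{align*}
\mathrm{d}\|u(t)\|_{H^k}^2 \lesssim \bigl(1+\|u(t)\|_{H^k}^2\bigr)\,\mathrm{d}t + \mathrm{d}M_t,
\end{align*}
with implied constant depending only on $K_0,\mu,\|V\|_{H^k}$ and the coefficients $a,b$. I would then conclude exactly as in Proposition \ref{apriori}: apply a pathwise Gr\"onwall inequality, square, take the supremum over $[0,T]$ and the expectation, and control $E[\sup_{[0,T]}|M_t|^2]$ by Burkholder-Davis-Gundy (whose bound is again of the type $\int_0^T E\|u\|_{H^k}^4\,\mathrm{d}s$), obtaining $E[\sup_{[0,T]}\|u(t)\|_{H^k}^2]\le C(T)$; the stated first-moment bound then follows by Jensen's inequality. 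The main obstacle, and indeed the only reason for restricting to the special class \eqref{HJBp}, is precisely the control of the supercritical term $|\nabla u|^2$ at top order in the absence of a cut-off: the maximum principle of Lemma \ref{maximum} is exactly the ingredient that supplies a time-independent $L^\infty$ bound on $\nabla u$, turning that term into one absorbable by the viscosity.
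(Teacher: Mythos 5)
Your proposal matches the paper's own proof essentially step for step: Itô's formula at the $L^2$ and $\Lambda^k$ levels, Lemma \ref{operator} for the Stratonovich corrections, Kato--Ponce plus the maximum principle of Lemma \ref{maximum} to reduce the top-order nonlinearity to $\|\nabla u(0)\|_{L^\infty}\|\Lambda^{k+1}u\|_{L^2}\|\Lambda^k u\|_{L^2}$, Young's inequality to absorb $\|\Lambda^{k+1}u\|_{L^2}^2$ into the viscous dissipation, and then Gr\"onwall, squaring, and Burkholder--Davis--Gundy exactly as after \eqref{Hk-estimate}. The argument is correct and no substantive difference from the paper's route remains to comment on.
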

\begin{proof}
Applying It\^{o}'s formula for $\frac12\|\cdot\|^2_{L^2}$ to the solution $u$ of \eqref{HJBp},
\begin{align*}
   \frac12 \|u(t)\|^2_{L^2}&= \frac12 \|u(0)\|^2_{L^2}\\
    &\quad+\int^t_0\left\langle V(x)+\mu\Delta u(s) -\frac12| \nabla u(s)|^2, u(s) \right\rangle_{L^2}\,ds\\
    &\quad +\int^t_0 \langle \mathscr{L} u(s),u(s) \rangle_{L^2} \,dW(s)+\frac12\int^t_0\left(\langle \mathscr{L}^2 u(s),u(s) \rangle_{L^2} + \langle \mathscr{L} u(s), \mathscr{L} u(s) \rangle_{L^2}\right)\,ds\\
    &\lesssim \frac12 \|u(0)\|^2_{L^2}+\int^t_0\|V\|_{L^2} \|u(s)\|_{L^2}\mathrm{d}s-\mu \int^t_0 \| \nabla u(s)\|^2_{L^2}\mathrm{d}s\\
    &+\frac12\int^t_0\||\nabla u(s)|^2\|_{L^2}\|u\|_{L^2}\mathrm{d}s+\int^t_0 \langle \mathscr{L} u(s),u(s) \rangle_{L^2} \mathrm{d}W(s)+ C\int^t_0\|u(s)\|^2_{L^2},
\end{align*}
where we used Lemma \ref{operator} to estimate the last two terms of the penultimate inequality. Next, using Lemma \ref{maximum} and $ab \le \frac{a^2+b^2}{2}$ for $a,b \in \mathbb{R}$, one gets
\begin{align}\label{ltwo}
    \|u(t)\|^2_{L^2} \lesssim \left(\|u(0)\|^2_{L^2}+\|\nabla u(0)\|^2_{L^\infty}+\|V\|^2_{L^2}\right)+ \int^t_0 \|u(s)\|^2_{L^2}\,ds+\int^t_0 \langle \mathscr{L} u(s),u(s) \rangle_{L^2} \mathrm{d}W(s).
\end{align}
We now compute the evolution of $\|\Lambda^k u\|_{L^2}$ as follows
\begin{align}\label{evolution}
\begin{aligned}
    \frac12 \| \Lambda^k u(s)\|^2_{L^2}
    &=\frac12 \|\Lambda^k u(0)\|^2_{L^2}+ \int^t_0\langle \Lambda^k( \mathscr{L} \nabla u(s)), \Lambda^k u(s) \rangle_{L^2}\,\mathrm{d}W(s)\\
    & +\int^t_0 \left\langle \Lambda^k \left( -\frac{|\nabla u(s)|^2}{2}+ V(x)+\mu\Delta u(s) \right), \Lambda^k u(s)\right\rangle_{L^2}\mathrm{d}s\\
    &+\frac12 \int^t_0\left(\langle \Lambda^k(\mathscr{L}u(s) ),\Lambda^k(\mathscr{L} u(s)) \rangle_{L^2}+\langle \Lambda^k\mathscr{L}^2u(s), \Lambda^k u(s) \rangle_{L^2}\right) \mathrm{d} s 
\end{aligned}
\end{align}
The non-linear term on the right-hand side above is treated as
\begin{align}\label{nonlinear}
\left|\frac12\left\langle \Lambda^k \left(|\nabla u|^2 \right), \Lambda^k u \right\rangle_{L^2}\right| &\le \| \Lambda^k(| \nabla u |^2)\|_{L^2} \| \Lambda^k u \|_{L^2} \le C \| \Lambda^k(\nabla u )\|_{L^2}\| \nabla u \|_{L^\infty} \| \Lambda^k u \|_{L^2} \notag \\ 
& \lesssim \| \Lambda^{k+1} u\|_{L^2}\| \nabla u \|_{L^\infty} \| \Lambda^k u \|_{L^2}\notag \\
& \lesssim \frac{1}{2 \mu}\|\nabla u \|^2_{L^\infty} \| \Lambda^k u \|^2_{L^2}+ \frac{\mu}{2}\| \Lambda^{k+1} u\|^2_{L^2},
\end{align}
where we have used Lemma \ref{kato} in the second inequality and the last term on the RHS of \eqref{evolution} is written as 
\begin{align}\label{IBP}
\mu \langle \Lambda^k \Delta u, \Lambda^k u \rangle_{L^2}= - \mu \langle \Lambda^{k+1} u, \Lambda^{k+1} u \rangle_{L^2}= -\mu \| \Lambda^{k+1} u\|^2_{L^2}.
\end{align}
Using \eqref{nonlinear} and \eqref{IBP} in \eqref{evolution}, and using Lemmas \ref{maximum} and \ref{operator} we deduce that
\begin{align}\label{hk}
\begin{aligned}
      \| \Lambda^k u(t)\|^2_{L^2}
    \lesssim \left(\|\Lambda^k u(0)\|^2_{L^2}+T\|\Lambda^k V\|^2_{L^2}\right)&+\int^t_0\left( 1 +\frac{\|\nabla u(0)\|^2_{L^\infty}}{\mu}\right) \|\Lambda^k u(s) \|^2_{L^2}\mathrm{d}s\\
    &+\int^t_0\langle \Lambda^k( \mathscr{L} u(s)), \Lambda^k u(s) \rangle_{L^2}\,\mathrm{d}W(s)
    \end{aligned}
\end{align}
We can now add \eqref{ltwo} and \eqref{hk} to get
\begin{align*}
 \|u(t)\|^2_{H^k} &\lesssim C(T,V, \nabla u(0))+ \| u(0)\|^2_{H^k}+\int^t_0 \|u(s)\|^2_{H^k}\mathrm{d}s\\
 &+\int^t_0\langle \mathscr{L} u(s),  u(s) \rangle_{L^2}\,\mathrm{d}W(s)+\int^t_0\langle \Lambda^k( \mathscr{L} u(s)), \Lambda^k u(s) \rangle_{L^2}\,\mathrm{d}W(s)
\end{align*}
We proceed as after Eq. \eqref{Hk-estimate} in Proposition \ref{apriori} and conclude $$ E\left[\sup_{t \in [0,T]} \| u(t)\|^4_{H^k(\mathbb{T}^n)}\right] \le C.$$
\end{proof}
\section{Acknowledgements}
The first and second authors acknowledge the support of the FCT project UIDB/00208/2020. The third author would like to thank the FCT project CEMAPRE/REM-UIDB/05069/2020.
\appendix 
\section{Appendix}
\label{app}
Define the operator $Au= \nu \Delta^k u$, with $\nu >0$, and let $S(t)$ be the semigroup generated by $A$ in $L^2(\mathbb{T}^n)$. Then the fractional powers satisfy, for every $\alpha>0$,

\begin{itemize}
    \item[(1)] $\|u\|_{H^{2k\alpha}(\mathbb{T}^n)} \le C_\alpha \|(I-A)^\alpha u \|_{L^2(\mathbb{T}^n)}$,
    \item[(2)] $\|(I-A)^\alpha S(t)u\|_{L^2(\mathbb{T}^n)}\le \frac{C_\alpha}{ t^\alpha} \|u\|_{L^2(\mathbb{T}^n)}$,
\end{itemize}
 for all $t \in(0,T]$ and for some $C_\alpha>0$.

\begin{lemma}[\cite{alonso-leon}]\label{frac}
Let $u \in C\left([0, T] ; L^{2}\left(\mathbb{T}^n\right)\right), u_{i} \in L^2(\Omega;C\left([0, T] ; L^{2}\left(\mathbb{T}^n\right))\right), i \in \mathbb{N}$, and $t \in(0, T]$. We have
$$
\left\|\int_{0}^{t} S(t-s) u(s) \mathrm{d} s\right\|_{H^{\beta}(\mathbb{T}^n)}^2 \lesssim T^{2-\beta / k} \sup _{t \in[0, T]}\|u(s)\|_{L^{2}(\mathbb{T}^n)}^2,
$$
for $0<\beta<k$. Moreover,
$$
\mathbb{E}\left[\sup _{t \in[0, T]}\left\|\sum_{i=1}^{\infty} \int_{0}^{t} S(t-s) u_{i}(s) \mathrm{d} W_{i}(s)\right\|^2_{H^{\beta}(\mathbb{T}^n)}\right] \lesssim T^{2-\beta / k} \mathbb{E}\left[\sup _{t \in[0, T]} \sum_{i=1}^{\infty}\left\|u_{i}(t)\right\|_{L^{2}(\mathbb{T}^n)}^2\right],
$$
for $\{W_i\}_i$ independent Brownian motions and $0<\beta<k$.
\end{lemma}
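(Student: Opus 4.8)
The plan is to prove both inequalities by transferring the $H^\beta$ norm onto a fractional power of $(I-A)$ in $L^2$ and then invoking the smoothing of $S(t)$ recorded in the Appendix. Throughout I set $\alpha:=\beta/(2k)$, so that $2k\alpha=\beta$ and, since $0<\beta<k$, one has $0<\alpha<\tfrac12$. By estimate (1) of the Appendix, $\|v\|_{H^\beta(\mathbb{T}^n)}\le C_\alpha\|(I-A)^\alpha v\|_{L^2(\mathbb{T}^n)}$, so in each case it suffices to bound the $L^2$ norm of $(I-A)^\alpha$ applied to the relevant convolution, after which estimate (2), $\|(I-A)^\alpha S(t)w\|_{L^2}\le C_\alpha t^{-\alpha}\|w\|_{L^2}$, supplies the gain of regularity.

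For the deterministic term I would write $\Phi(t):=\int_0^t S(t-s)u(s)\,\mathrm{d}s$. Since $(I-A)^\alpha$ commutes with $S(t-s)$, Minkowski's integral inequality gives $\|(I-A)^\alpha\Phi(t)\|_{L^2}\le\int_0^t\|(I-A)^\alpha S(t-s)u(s)\|_{L^2}\,\mathrm{d}s$, and estimate (2) together with $\|u(s)\|_{L^2}\le\sup_{[0,T]}\|u\|_{L^2}$ yields $\|(I-A)^\alpha\Phi(t)\|_{L^2}\lesssim\big(\sup_{[0,T]}\|u\|_{L^2}\big)\int_0^t(t-s)^{-\alpha}\,\mathrm{d}s$. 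As $\alpha<1$, the kernel is integrable and $\int_0^t(t-s)^{-\alpha}\,\mathrm{d}s=\tfrac{t^{1-\alpha}}{1-\alpha}\lesssim T^{1-\alpha}$; squaring and using $2(1-\alpha)=2-\beta/k$ gives the first claim.

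The stochastic term is the genuinely delicate one, and I expect it to be the main obstacle: the process $M(t):=\sum_i\int_0^t S(t-s)u_i(s)\,\mathrm{d}W_i(s)$ is \emph{not} a martingale in $t$, so the supremum inside the expectation cannot be reached by a bare Burkholder--Davis--Gundy argument. I would remove this obstruction with the factorization method. Fix $\kappa\in(0,\tfrac12-\alpha)$, a nonempty range precisely because $\alpha<\tfrac12$, and use the semigroup identity $S(t-s)=S(t-r)S(r-s)$ together with $\int_s^t(t-r)^{\kappa-1}(r-s)^{-\kappa}\,\mathrm{d}r=\tfrac{\pi}{\sin\pi\kappa}$ and the stochastic Fubini theorem to rewrite $M(t)=c_\kappa\int_0^t(t-r)^{\kappa-1}S(t-r)Y(r)\,\mathrm{d}r$, where $Y(r):=\sum_i\int_0^r(r-s)^{-\kappa}S(r-s)u_i(s)\,\mathrm{d}W_i(s)$. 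The entire $t$-dependence now lives in a deterministic-type convolution, so applying $(I-A)^\alpha$, using that $S$ is a contraction on $L^2$, and Hölder's inequality in $r$ with an exponent $p>1/\kappa$ and conjugate $p'$ gives $\sup_{t\le T}\|(I-A)^\alpha M(t)\|_{L^2}^2\lesssim T^{c_1}\big(\int_0^T\|(I-A)^\alpha Y(r)\|_{L^2}^p\,\mathrm{d}r\big)^{2/p}$ for some $c_1>0$, the kernel $(t-r)^{(\kappa-1)p'}$ being integrable exactly when $\kappa>1/p$.

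Finally, for each fixed $r$ the integral $Y(r)$ is a true stochastic integral, so Burkholder--Davis--Gundy applies: $\mathbb{E}\|(I-A)^\alpha Y(r)\|_{L^2}^p\lesssim\mathbb{E}\big(\int_0^r(r-s)^{-2\kappa}\sum_i\|(I-A)^\alpha S(r-s)u_i(s)\|_{L^2}^2\,\mathrm{d}s\big)^{p/2}$, and estimate (2) converts each summand into $(r-s)^{-2\alpha}\|u_i(s)\|_{L^2}^2$. Since $2\kappa+2\alpha<1$ by the choice of $\kappa$, the kernel $(r-s)^{-2\kappa-2\alpha}$ is integrable; bounding $\sum_i\|u_i(s)\|_{L^2}^2$ by $\sup_{[0,T]}\sum_i\|u_i\|_{L^2}^2$, pulling the supremum through by means of the auxiliary exponent $p>2$, taking expectations, and collecting the powers of $T$ produced by the two integrations then yields the asserted bound. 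The two points demanding care are thus (i) the non-martingale supremum, handled by factorization rather than a direct BDG inequality, and (ii) the simultaneous choice of $\alpha=\beta/(2k)$ and $\kappa$ keeping both singular kernels $(t-r)^{\kappa-1}$ and $(r-s)^{-2\kappa-2\alpha}$ integrable, which is possible only because the hypothesis $\beta<k$ forces $\alpha<\tfrac12$ and leaves room for $\kappa\in(0,\tfrac12-\alpha)$.
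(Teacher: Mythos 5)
The paper does not actually prove this lemma --- it is imported verbatim from \cite{alonso-leon} --- so your attempt must stand on its own. Your treatment of the deterministic convolution is correct and is the standard argument: with $\alpha=\beta/(2k)$, the Appendix estimates plus Minkowski give a factor $T^{1-\alpha}$, and squaring yields $T^{2-\beta/k}$. The factorization method is also the natural device for the supremum of the stochastic convolution, and you correctly identify why a bare Burkholder--Davis--Gundy argument fails. The genuine gap is in your final step. H\"older forces $p>1/\kappa$ with $\kappa<\tfrac12-\alpha$, hence $p>2/(1-2\alpha)>2$, and after BDG your chain produces
\begin{equation*}
\mathbb{E}\Bigl[\sup_{t\le T}\|M(t)\|_{H^\beta}^2\Bigr]\ \lesssim\ T^{1-2\alpha}\,\Bigl(\mathbb{E}\Bigl[\bigl(\sup_{t\in[0,T]}\textstyle\sum_i\|u_i(t)\|_{L^2}^2\bigr)^{p/2}\Bigr]\Bigr)^{2/p}.
\end{equation*}
By Jensen, $\bigl(\mathbb{E}[X^{p/2}]\bigr)^{2/p}\ge\mathbb{E}[X]$ when $p>2$, so the inequality runs the wrong way: ``pulling the supremum through by means of the auxiliary exponent $p>2$'' does not reduce this to the asserted right-hand side $\mathbb{E}\bigl[\sup_t\sum_i\|u_i(t)\|_{L^2}^2\bigr]$. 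Under the stated hypothesis $u_i\in L^2(\Omega;C([0,T];L^2(\mathbb{T}^n)))$, the quantity your proof bounds by may well be infinite. Factorization intrinsically delivers the maximal estimate with $p$-th moments on both sides for $p$ large; the second-moment version stated in the lemma needs either strengthened moment hypotheses on the $u_i$ or a different ingredient (for instance a dilation-type maximal inequality for the self-adjoint contraction semigroup $S(t)$, combined with the smoothing estimate), and your write-up does not supply it.

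Separately, track your powers of $T$: the H\"older step gives $T^{2(\kappa-1/p)}$, the $r$-integration $T^{2/p}$, and the quadratic-variation bound $T^{1-2\kappa-2\alpha}$, which collect to $T^{1-2\alpha}=T^{1-\beta/k}$, not the $T^{2-\beta/k}$ asserted in the statement; your closing claim that this ``yields the asserted bound'' is therefore inaccurate as written. In fact your exponent is the correct one, and the statement as transcribed in this paper contains a typo: $T^{2-\beta/k}$ cannot hold for the stochastic term. Take a single fixed nonzero smooth $\phi$, set $u_1\equiv\phi$ and $u_i=0$ for $i\ge2$; It\^o's isometry gives $\mathbb{E}\|M(T)\|_{H^\beta}^2\ge\int_0^T\|S(\sigma)\phi\|_{L^2}^2\,\mathrm{d}\sigma\sim T\|\phi\|_{L^2}^2$ as $T\to0$, which exceeds $C\,T^{2-\beta/k}\|\phi\|_{L^2}^2$ for small $T$ since $2-\beta/k>1$. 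When a computation lands on a different exponent than the target, that mismatch should be flagged and diagnosed rather than papered over with ``collecting the powers of $T$.''
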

\begin{lemma}[Kato-Ponce inequality \cite{kato}]\label{kato}
      Let $s>0, 1<p<\infty, f, g \in\left(H^s \cap W^{1, \infty}\right)\left(\mathbb{T}^n\right)$, $1< p_2, p_3<\infty$ and $1<p_1, p_4 \le \infty$ with
$$
\frac{1}{p_1}+\frac{1}{p_2}=\frac{1}{p_3}+\frac{1}{p_4}=\frac{1}{p} .
$$
Then the following inequality holds,
$$
\begin{array}{c}
\left\|\Lambda^s(f g)\right\|_{L^p} \lesssim\|f\|_{L^{p_1}}\left\|\Lambda^s g\right\|_{L^{p_2}}+\left\|\Lambda^s f\right\|_{L^{p_3}}\|g\|_{L^{p_4}}.
\end{array}
$$
\end{lemma}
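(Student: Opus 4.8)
The plan is to prove this fractional Leibniz rule by a Littlewood--Paley / paraproduct decomposition, since $\Lambda^s=(-\Delta)^{s/2}$ is a Fourier multiplier and the product $fg$ can be split according to the interacting frequency scales. First I would fix a dyadic partition of unity on $\mathbb{T}^n$: frequency projections $\Delta_j$ with spectrum in annuli $|\xi|\sim 2^j$, a low-frequency block $\Delta_{-1}$ carrying the finitely many modes $|\xi|\lesssim 1$ (which on the torus includes the zero mode), and partial sums $S_j=\sum_{j'<j}\Delta_{j'}$. Throughout I would lean on two classical facts: Bernstein's inequality, which lets me treat $\Lambda^s$ as multiplication by $2^{js}$ on a block spectrally localized in $|\xi|\sim 2^j$ and lets me pass between $L^q$ norms on a fixed block; and the Fefferman--Stein vector-valued maximal inequality, which reassembles the dyadic pieces into an $L^p$ norm.

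Next I would invoke Bony's decomposition
\[
fg = T_f g + T_g f + R(f,g),\qquad T_f g=\sum_j (S_{j-1}f)\,\Delta_j g,\quad R(f,g)=\sum_{|j-j'|\le 1}\Delta_j f\,\Delta_{j'}g,
\]
and estimate $\Lambda^s$ applied to each of the three pieces. For the low--high paraproduct $T_f g$, every summand $(S_{j-1}f)\Delta_j g$ has spectrum in $|\xi|\sim 2^j$, so $\Lambda^s$ contributes a factor $2^{js}$ which I attach to $\Delta_j g$; Hölder with $\tfrac1{p_1}+\tfrac1{p_2}=\tfrac1p$ separates $\|S_{j-1}f\|_{L^{p_1}}\lesssim\|f\|_{L^{p_1}}$ from $2^{js}\|\Delta_j g\|_{L^{p_2}}$, and Fefferman--Stein sums the latter to $\|\Lambda^s g\|_{L^{p_2}}$, producing the term $\|f\|_{L^{p_1}}\|\Lambda^s g\|_{L^{p_2}}$. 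By symmetry $T_g f$ yields $\|\Lambda^s f\|_{L^{p_3}}\|g\|_{L^{p_4}}$.

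The delicate piece is the high--high (resonant) remainder $R(f,g)$: here each summand $\Delta_j f\,\Delta_{j'}g$ with $|j-j'|\le 1$ can have spectrum reaching down to the origin, so there is no single frequency scale to read off for $\Lambda^s$. Instead I would decompose $R(f,g)$ into output blocks indexed by $\ell$, use that the output frequency is $\lesssim 2^j$, and exploit $s>0$: applying $\Lambda^s$ costs at most $2^{\ell s}$ on an output block with $\ell\le j+O(1)$, and after summing over $\ell\le j$ the geometric series $\sum_{\ell\le j}2^{(\ell-j)s}$ converges \emph{precisely because} $s>0$, leaving a net factor $2^{js}$ that I again place on one factor and resum by Fefferman--Stein. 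This single step is the main obstacle and the only place the hypothesis $s>0$ is genuinely used. Finally, the endpoint exponents $p_1=\infty$ or $p_4=\infty$ (permitted for $p_1,p_4$ but excluded for $p_2,p_3$) are handled by replacing the square-function bound for the $L^\infty$ factor with the direct estimate $\|S_{j-1}f\|_{L^\infty}\lesssim\|f\|_{L^\infty}$, circumventing the failure of the Littlewood--Paley characterization at $L^\infty$. Collecting the three contributions yields the asserted inequality. All of this is classical Euclidean harmonic analysis; on $\mathbb{T}^n$ it carries over verbatim by periodization/transference of the underlying Coifman--Meyer multiplier bounds, so I would only remark on the transference rather than reprove it.
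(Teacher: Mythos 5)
The paper offers no proof of this lemma: it is stated in the appendix as a quoted classical result, attributed to Kato--Ponce, and used as a black box (in Subsections 3.1 and 3.2 to control terms like $\Lambda^{k-1}(\nabla u_1\cdot\nabla u)$ and $\Lambda^k(|\nabla u|^2)$). So there is no internal argument to compare yours against; the relevant comparison is with the literature the paper implicitly invokes. Your outline is the standard modern proof of the fractional Leibniz rule and is sound at the level of detail given: the low--high and high--low paraproducts are handled exactly as in the classical treatment (spectral localization in an annulus lets $\Lambda^s$ act as $2^{js}$, then H\"older plus the square-function/Fefferman--Stein machinery resums), you correctly isolate the resonant piece $R(f,g)$ as the only place where $s>0$ is genuinely needed (the geometric sum $\sum_{\ell\le j}2^{(\ell-j)s}$ over output scales), and your remark about replacing the square-function bound by $\|S_{j-1}f\|_{L^\infty}\lesssim\|f\|_{L^\infty}$ is the right fix at the endpoint exponents $p_1,p_4=\infty$, where the Littlewood--Paley characterization of $L^p$ fails. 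Two small caveats: first, the statement as printed (with $L^\infty$ endpoints allowed on $p_1,p_4$) is sharper than the original Kato--Ponce commutator estimate and is really the Grafakos--Oh / Muscalu--Schlag form, so your proof is matched to the statement actually used, which is the right call; second, on $\mathbb{T}^n$ you should be slightly careful that $\Lambda^s=(-\Delta)^{s/2}$ annihilates the zero mode, so the low-frequency block $\Delta_{-1}$ in the resonant sum needs the (trivial) observation that $\Lambda^s$ restricted to the finitely many surviving modes is $L^p$-bounded --- your transference remark covers this, but it is worth a line. Nothing in your argument would fail, and the hypotheses $f,g\in(H^s\cap W^{1,\infty})(\mathbb{T}^n)$ serve only to make both sides finite.
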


\begin{lemma}\label{operator}
Let $\mathscr{Q}$ be a linear first order differential operator with smooth and bounded coefficients, then the following inequality holds for $u\in H^2(\mathbb{T}^n)$
\begin{align*}
    \langle \mathscr{Q}^2 u, u\rangle_{L^2} +\langle \mathscr{Q} u, \mathscr{Q} u \rangle_{L^2} \lesssim \|u\|^2_{L^2}.
\end{align*}
For $u \in H^{k+2}(\mathbb{T}^n)$ and $\mathscr{P}$ a pseudo-differential operator of order $k$ the following holds
\begin{align*}
    \langle \mathscr{P}\mathscr{Q}^2 u,\mathscr{P} u\rangle_{L^2} +\langle \mathscr{P}\mathscr{Q} u, \mathscr{P}\mathscr{Q} u \rangle_{L^2} \lesssim \|u\|^2_{H^k}.
\end{align*}
\end{lemma}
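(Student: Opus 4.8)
The plan is to exploit the algebraic structure of the combination, which is engineered precisely so that the top-order contributions cancel (it is the Stratonovich-to-It\^o correction, which carries no net dissipation). Throughout I write $\mathscr{Q}u = a\cdot\nabla u + bu$ with $a,b$ smooth and bounded, and I record the two elementary facts I will lean on repeatedly: on the torus there are no boundary terms in integration by parts, and the formal adjoint satisfies $\mathscr{Q}^* = -\mathscr{Q} + c$ with $c:= 2b - \nabla\cdot a$ a bounded zeroth-order multiplier.

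For the first inequality I would move one factor of $\mathscr{Q}$ onto $u$ through the adjoint,
$$\langle \mathscr{Q}^2 u, u\rangle_{L^2} = \langle \mathscr{Q} u, \mathscr{Q}^* u\rangle_{L^2} = -\langle \mathscr{Q} u, \mathscr{Q} u\rangle_{L^2} + \langle \mathscr{Q} u, cu\rangle_{L^2}.$$
Adding $\langle \mathscr{Q} u, \mathscr{Q} u\rangle_{L^2}$ cancels the quadratic gradient term exactly and leaves only $\langle \mathscr{Q} u, cu\rangle_{L^2}$. The sole term here still carrying a derivative is $\langle a\cdot\nabla u, cu\rangle_{L^2}$, and one more integration by parts rewrites it as $-\tfrac12\langle \nabla\cdot(ac), u^2\rangle_{L^2}$, which is manifestly $\lesssim \|u\|_{L^2}^2$. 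This settles Part~1.

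For the second inequality I set $v:=\mathscr{P}u$ and commute $\mathscr{P}$ past $\mathscr{Q}$, writing $\mathscr{P}\mathscr{Q} = \mathscr{Q}\mathscr{P} + C$ with $C:=[\mathscr{P},\mathscr{Q}]$. The decisive structural point is that $C$ has order $k$ rather than $k+1$: the principal symbols commute and only the subprincipal term survives, a gain of one derivative. Expanding both summands then yields
\begin{align*}
\langle \mathscr{P}\mathscr{Q}^2 u, \mathscr{P}u\rangle_{L^2} + \langle \mathscr{P}\mathscr{Q}u, \mathscr{P}\mathscr{Q}u\rangle_{L^2}
&= \langle \mathscr{Q}^2 v, v\rangle_{L^2} + \langle \mathscr{Q}v, \mathscr{Q}v\rangle_{L^2} \\
&\quad + \langle \mathscr{Q}Cu, v\rangle_{L^2} + \langle C\mathscr{Q}u, v\rangle_{L^2} + 2\langle \mathscr{Q}v, Cu\rangle_{L^2} + \|Cu\|_{L^2}^2.
\end{align*}
The first pair is controlled by Part~1 applied to $v$, giving $\lesssim \|v\|_{L^2}^2 \lesssim \|u\|_{H^k}^2$, and $\|Cu\|_{L^2}^2 \lesssim \|u\|_{H^k}^2$ since $C$ has order $k$.

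Handling the three cross terms is the main obstacle, since each naively pairs an order-$(k+1)$ quantity against the order-$k$ object $v=\mathscr{P}u$, and I would dispose of them with a second round of the same two tricks. Using $\mathscr{Q}^* = -\mathscr{Q}+c$ converts $2\langle \mathscr{Q}v, Cu\rangle_{L^2}$ into $-2\langle \mathscr{Q}Cu, v\rangle_{L^2} + 2\langle cCu, v\rangle_{L^2}$, while commuting $C$ with $\mathscr{Q}$ in $\langle C\mathscr{Q}u, v\rangle_{L^2}$ replaces it by $\langle \mathscr{Q}Cu, v\rangle_{L^2} + \langle [C,\mathscr{Q}]u, v\rangle_{L^2}$, where $[C,\mathscr{Q}]=[[\mathscr{P},\mathscr{Q}],\mathscr{Q}]$ again has order $k$. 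Summing, the three dangerous copies of $\langle \mathscr{Q}Cu, v\rangle_{L^2}$ occur with coefficients $1+1-2=0$ and cancel identically, leaving only $\langle [C,\mathscr{Q}]u, v\rangle_{L^2} + 2\langle cCu, v\rangle_{L^2}$, in which every operator has order $k$ and every term is therefore $\lesssim \|u\|_{H^k}^2$. The only genuinely analytic input beyond this bookkeeping is the commutator bounds $\|[\mathscr{P},\mathscr{Q}]u\|_{L^2},\ \|[[\mathscr{P},\mathscr{Q}],\mathscr{Q}]u\|_{L^2} \lesssim \|u\|_{H^k}$; for the case $\mathscr{P}=\Lambda^k$ actually used in the body these follow from the Kato--Ponce inequality (Lemma~\ref{kato}), and in general from the symbolic calculus of pseudo-differential operators.
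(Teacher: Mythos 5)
Your argument is correct, and it is worth noting that the paper itself states Lemma \ref{operator} in the appendix without any proof, so there is no authorial argument to compare against; your write-up supplies the missing justification. The two pillars of your proof are exactly the right ones: the adjoint identity $\mathscr{Q}^*=-\mathscr{Q}+c$ with $c=2b-\nabla\cdot a$, which makes the top-order terms in $\langle \mathscr{Q}^2u,u\rangle_{L^2}+\|\mathscr{Q}u\|_{L^2}^2$ cancel exactly and leaves only $\langle \mathscr{Q}u,cu\rangle_{L^2}=-\tfrac12\langle\nabla\cdot(ac),u^2\rangle_{L^2}+\langle bu,cu\rangle_{L^2}$; and, for the second inequality, the reduction to Part 1 applied to $v=\mathscr{P}u$ plus the bookkeeping of the commutator $C=[\mathscr{P},\mathscr{Q}]$ of order $k$. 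I checked the cancellation of the three copies of $\langle\mathscr{Q}Cu,v\rangle_{L^2}$ with coefficients $1+1-2=0$ and it is exact, so what remains is genuinely of order $k$ paired against $v$ and bounded by $\|u\|_{H^k}^2$.

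One small point of rigor: the commutator bounds $\|[\Lambda^k,\mathscr{Q}]u\|_{L^2}\lesssim\|u\|_{H^k}$ and $\|[[\Lambda^k,\mathscr{Q}],\mathscr{Q}]u\|_{L^2}\lesssim\|u\|_{H^k}$ do not follow from the product form of Kato--Ponce as stated in Lemma \ref{kato}; they require the commutator form $\|[\Lambda^s,f]g\|_{L^2}\lesssim\|\nabla f\|_{L^\infty}\|\Lambda^{s-1}g\|_{L^2}+\|\Lambda^s f\|_{L^{p}}\|g\|_{L^{q}}$ (or the symbolic calculus for operators with coefficients of limited smoothness). Under Assumption \ref{A3}, with $a_i,b_i\in W^{k+1,\infty}(\mathbb{T}^n)$, these estimates do hold, so the gap is one of citation rather than substance; you should state which commutator estimate you are invoking.
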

Next we state the following two classical results
\begin{thm}[Skorokhod representation \cite{daprato}] Let $\left\{\mu_n\right\}_{n \in \mathbb{N}}$ be a sequence of probability measures that converges weakly to some measure $\mu$. Assume the support of $\mu$ is separable. Then there exists a probability space $(\Omega, \mathcal{A}, \mathbb{P})$ and random variables $\left\{X_n\right\}_{n=1}^{\infty}$, such that $X_n$ converges almost surely to a random variable $X$, where the laws of $X_n$ and $X$ are $\mu_n$ and $\mu$, respectively.
\end{thm}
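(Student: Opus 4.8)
The plan is to realize all the random variables on a single canonical probability space, namely the unit interval $[0,1]$ equipped with its Borel $\sigma$-algebra and Lebesgue measure $\mathbb{P}=\mathrm{Leb}$, and to exploit the fact that weak convergence is governed, through the portmanteau theorem, by convergence of masses on sets whose boundary is $\mu$-null. The heart of the matter is to \emph{couple} the approximating laws $\mu_n$ to the limit $\mu$ on this common space in a way that is at once exact (so that the $n$-th constructed variable has law precisely $\mu_n$) and convergent almost surely.

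First I would treat the scalar case $S\subseteq\mathbb{R}$, which is both instructive and the building block for the general argument. Writing $F_n$ and $F$ for the distribution functions of $\mu_n$ and $\mu$, weak convergence is equivalent to $F_n(x)\to F(x)$ at every continuity point $x$ of $F$. Defining the left-continuous quantile functions $X_n(\omega)=\inf\{x:F_n(x)\ge\omega\}$ and $X(\omega)=\inf\{x:F(x)\ge\omega\}$ for $\omega\in(0,1)$ realizes, on $([0,1],\mathrm{Leb})$, random variables with laws exactly $\mu_n$ and $\mu$. A monotonicity argument then shows $X_n(\omega)\to X(\omega)$ at every $\omega$ at which $X$ is continuous; since $X$ is nondecreasing it is continuous off an at-most-countable (hence Lebesgue-null) set, so $X_n\to X$ almost surely.

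For the general separable-support case I would replace the quantile transform by a nested-partition coupling. Using separability of the support of $\mu$, for each level $m$ I would build a countable Borel partition $\{B_{m,j}\}_j$ of that support into cells of diameter $<1/m$ whose boundaries are $\mu$-null (the boundary-null requirement is arranged by choosing covering-ball radii outside an at-most-countable bad set), with the level-$(m+1)$ partition refining the level-$m$ one. By the portmanteau theorem $\mu_n(B_{m,j})\to\mu(B_{m,j})$ for each such cell, whence $\sum_j|\mu_n(B_{m,j})-\mu(B_{m,j})|\to 0$ as $n\to\infty$ for each fixed $m$. I would then subdivide $[0,1]$ recursively into nested subintervals of lengths $\mu(B_{m,j})$, select a representative point in each cell, and set $X(\omega)$ to be the limit of these representatives (convergent since the diameters shrink); because the cells generate the Borel sets, $X$ has law exactly $\mu$. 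Each $X_n$ is defined by the analogous subdivision built from the masses $\mu_n(B_{m,j})$, \emph{coupled} to the $\mu$-subdivision via the same $\omega$, so that at every fixed level $m$ the cells containing $X_n(\omega)$ and $X(\omega)$ coincide outside a set of measure at most $\tfrac12\sum_j|\mu_n(B_{m,j})-\mu(B_{m,j})|$.

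The main obstacle --- and the step demanding the most care --- is that these level-wise couplings only furnish convergence in probability, whereas the theorem asserts almost sure convergence of the full sequence: at each fixed level the two variables share a cell outside a set of measure $\delta_{m,n}:=\sum_j|\mu_n(B_{m,j})-\mu(B_{m,j})|$, which tends to $0$ in $n$ but not uniformly in $m$. This gap is closed by a diagonal choice of refinement level: since $\delta_{m,n}\to 0$ as $n\to\infty$ for each fixed $m$, one may pick $m(n)\to\infty$ slowly enough that $\sum_n\delta_{m(n),n}<\infty$ and build $X_n$ from the level-$m(n)$ coupling. Then $\mathbb{P}(d(X_n,X)\ge 1/m(n))\le\delta_{m(n),n}$ is summable, so Borel--Cantelli forces $d(X_n,X)<1/m(n)\to 0$ eventually, giving $X_n\to X$ almost surely, while the marginal of $X_n$ is untouched by the coupling and remains exactly $\mu_n$. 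Since the application in this paper is to the Polish space $E=C([0,T];H^\beta(\mathbb{T}^n))$, where separability is automatic, all hypotheses are met and the representation applies directly to the tight family $\{\rho^\gamma_r\}$.
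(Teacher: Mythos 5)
You should first note that the paper does not prove this statement at all: it is quoted in the appendix as a classical result with a citation to \cite{daprato}, so the only question is whether your argument is internally correct. Most of it is the standard Billingsley-style construction and is fine: the one-dimensional quantile coupling is correct; the nested partitions into boundary-null cells of diameter $<1/m$ exist by the radius-selection argument you indicate; and the claim $\delta_{m,n}:=\sum_j|\mu_n(B_{m,j})-\mu(B_{m,j})|\to 0$ as $n\to\infty$ for fixed $m$ is true, although with countably many cells it needs slightly more than portmanteau on each cell --- namely dominated convergence over $j$ for $\sum_j(\mu(B_{m,j})-\mu_n(B_{m,j}))^{+}$ together with $\liminf_n\mu_n\bigl(\bigcup_j B_{m,j}\bigr)\ge 1$, which also handles the mass that $\mu_n$ carries outside the support of $\mu$ (a bookkeeping point your write-up leaves implicit: on the corresponding leftover subinterval $X_n$ must be given the conditional law of $\mu_n$ off $\bigcup_j B_{m,j}$).

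The genuine gap is exactly at the step you flag as the main obstacle. From the hypothesis $\delta_{m,n}\to 0$ ($n\to\infty$, each $m$ fixed) one \emph{cannot} in general pick $m(n)\to\infty$ with $\sum_n\delta_{m(n),n}<\infty$: for fixed $n$ the quantity $\delta_{m,n}$ increases under refinement toward a total-variation-type distance, and nothing prevents $\delta_{m,n}\ge 1/n$ for \emph{every} $m$; then every admissible diagonal satisfies $\sum_n\delta_{m(n),n}\ge\sum_n 1/n=\infty$, Borel--Cantelli is inconclusive, and your estimate $\mathbb{P}(d(X_n,X)\ge 1/m(n))\le\delta_{m(n),n}$ yields only convergence in probability (equivalently, a.s. convergence along a subsequence); since the mismatch sets vary with $n$, a typewriter-style arrangement of them is compatible with your bounds and defeats a.s. convergence of the full sequence. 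The standard repair changes where Borel--Cantelli is applied: work in blocks $n_m\le n<n_{m+1}$, retain only finitely many cells $B_{m,1},\dots,B_{m,k_m}$ with $\mu\bigl(\bigcup_{j\le k_m}B_{m,j}\bigr)>1-2^{-m}$, choose $n_m$ so that $\mu_n(B_{m,j})\ge(1-2^{-m})\mu(B_{m,j})$ for all $j\le k_m$ and all $n\ge n_m$, and couple by embedding a \emph{fixed} $(1-2^{-m})$-fraction of each $\mu$-subinterval into the corresponding $\mu_n$-subinterval, dumping the surplus $\mu_n$-mass on the complementary fraction. Then the exceptional set $E_m$ (of measure $\lesssim 2^{-m}$) is the \emph{same for every $n$ in the block}, so $\sum_m\mathbb{P}(E_m)<\infty$ and Borel--Cantelli over $m$ (not over $n$) gives $d(X_n,X)\le 2/m$ eventually, a.s., while the marginal of each $X_n$ remains exactly $\mu_n$. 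With that replacement your argument becomes the standard proof and covers the paper's application to the Polish space $C([0,T];H^\beta(\mathbb{T}^n))$.
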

\begin{lemma}[Gyöngy-Krylov lemma \cite{gyongy}]\label{gyongy} Let $\left\{X_n\right\}_{n \in \mathbb{N}}$ be a sequence of random variables with values in a Polish space $(E, d)$, endowed with the Borel $\sigma$-algebra. Then $X_n$ converges in probability to an $E$-valued random process if and only if, for every pair of subsequences $\left\{X_{n_j}, X_{m_j}\right\}_{j \in \mathbb{N}}$, there exists a further subsequence that converges weakly to a random variable supported on the diagonal $\{(x, y) \in E \times E: x=y\}$.
\end{lemma}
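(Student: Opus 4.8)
The plan is to prove the two implications separately, with essentially all of the substance lying in the sufficiency (``if'') direction; necessity is a short soft-analysis argument. Throughout I equip $E \times E$ with the product metric $d_\times((x,y),(x',y')) = \max(d(x,x'),d(y,y'))$ and write $\Delta = \{(x,y) \in E \times E : x = y\}$ for the diagonal, which is closed since $d$ is continuous.

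For necessity, suppose $X_n \to X$ in probability for some $E$-valued random variable $X$. Given any pair of subsequences $\{X_{n_j}\}$ and $\{X_{m_j}\}$, both still converge to $X$ in probability (as $n_j, m_j \to \infty$), so $d_\times\big((X_{n_j},X_{m_j}),(X,X)\big) = \max\big(d(X_{n_j},X),d(X_{m_j},X)\big) \to 0$ in probability, i.e. the pairs converge in probability to $(X,X)$. Convergence in probability implies convergence in law, so the joint laws converge weakly to the law of $(X,X)$, which is trivially supported on $\Delta$. Thus the full sequence of pairs already has the required diagonal-supported weak limit, and a fortiori any further subsequence does too.

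For sufficiency I will show that $\{X_n\}$ is Cauchy in probability, meaning $\mathbb{P}\big(d(X_n,X_m) > \varepsilon\big) \to 0$ as $n,m \to \infty$ for every $\varepsilon > 0$. Because $(E,d)$ is complete, the space of $E$-valued random variables with the topology of convergence in probability is itself complete (this topology is metrizable, e.g. by $\rho(X,Y) = \mathbb{E}[\,d(X,Y) \wedge 1\,]$ or the Ky Fan metric), so the Cauchy property yields a limit $X$ with $X_n \to X$ in probability, which is the desired conclusion. I establish the Cauchy property by contradiction: if it fails, there exist $\varepsilon > 0$, $\delta > 0$ and indices $n_j, m_j \to \infty$ with $\mathbb{P}\big(d(X_{n_j},X_{m_j}) > \varepsilon\big) \ge \delta$ for all $j$. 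Applying the hypothesis to the pair $\{X_{n_j}\}$, $\{X_{m_j}\}$ furnishes a further subsequence (relabelled) along which the laws $\mu_j$ of $(X_{n_j},X_{m_j})$ converge weakly to some law $\mu$ with $\mu(\Delta) = 1$. The set $F_\varepsilon := \{(x,y) : d(x,y) \ge \varepsilon\}$ is closed and disjoint from $\Delta$, so $\mu(F_\varepsilon) = 0$; the portmanteau theorem for closed sets then gives $\limsup_j \mu_j(F_\varepsilon) \le \mu(F_\varepsilon) = 0$. But $\mu_j(F_\varepsilon) \ge \mathbb{P}\big(d(X_{n_j},X_{m_j}) > \varepsilon\big) \ge \delta > 0$ along the subsequence, a contradiction.

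The delicate points, rather than any single hard obstacle, are twofold. First, one must invoke the correct half of the portmanteau theorem: the closed-set inequality applied to the closed superlevel set $\{d \ge \varepsilon\}$, since the open-set inequality for $\{d > \varepsilon\}$ points the wrong way and would not close the argument. Second, one must justify that completeness of $E$ transfers to completeness of the space of random variables under convergence in probability, so that the Cauchy estimate genuinely produces a limit rather than merely controlling oscillation. Once these are pinned down the remainder is routine; the genuinely substantive idea is the contradiction mechanism that converts the diagonal-support hypothesis into a Cauchy-in-probability bound.
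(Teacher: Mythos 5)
Your proof is correct, but note that there is no in-paper argument to compare it against: the paper states this lemma in the appendix as a known classical result, citing \cite{gyongy} directly, and offers no proof. Your two-part argument — for necessity, convergence in probability of the pairs $(X_{n_j},X_{m_j})$ to $(X,X)$ and hence weak convergence to a diagonal-supported law; for sufficiency, the contradiction via the closed-set half of the portmanteau theorem applied to $F_\varepsilon=\{(x,y):d(x,y)\ge\varepsilon\}$, yielding Cauchy-in-probability and then a limit by completeness — is precisely the standard proof from the cited reference, and both delicate points you flag are handled correctly. The only step worth making fully explicit is that, since $E$ is Polish, you may take $d$ to be a complete metric without loss of generality (convergence in probability depends only on the topology of $E$, as it is characterized by almost-sure convergence along subsequences), which justifies the completeness of the space of $E$-valued random variables under the Ky Fan metric.
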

\bibliographystyle{plain}
\bibliography{laplacian}

\end{document}